\newcommand{\Z}{\mathbb{Z}}
\newcommand{\ud}{\mathrm{d}}
\newcommand{\la}{\langle}
\newcommand{\mc}{\lesssim}
\newcommand{\ra}{\rangle}
\newcommand{\N}{\mathbb{N}}
\newcommand{\f}{\hat{f}}
\newcommand{\ld}{L^{2}(\R^{d})}
\newcommand{\R}{\mathbb{R}}
\newcommand{\s}{\mathcal{S}(\R^{d})}
\newcommand{\st}{\mathcal{S}'(\R^{d})}
\newcommand{\F}{\mathcal{F}}
\newcommand{\C}{\mathbb{C}}
\newcommand{\om}{\omega}
\newcommand{\al}{\alpha}
\newcommand{\be}{\beta}
\newcommand{\p}{\partial}
\newcommand{\me}{\mathrm{e}}
\newtheorem{theorem}{Theorem}[section]
\newtheorem{prop}[theorem]{Proposition}
\newtheorem{Cor}[theorem]{Corollary}
\newtheorem{lemma}[theorem]{Lemma}
\newtheorem{remark}[theorem]{Remark}
\title[Gabor Decomposition of Evolution Operators and Applications]{Gabor Frame Decomposition of Evolution Operators and Applications}
\author[M.Berra]{Berra Michele}
\address{Dipartimento di Matematica ``Giuseppe Peano'', Universit\`a degli Studi di Torino, Via Carlo Alberto 10, 10123 Torino (TO), Italy.}\email{michele.berra@unito.it}
\keywords{Pseudodifferential Operator, Gabor Frames, Metaplectic Operator, Heat Equation, Evolution operators}
\subjclass[2010]{35S05, 42C15}
\begin{document}
\selectlanguage{english}
\begin{abstract}
We compute the Gabor matrix for Schr\"odinger-type evolution operators.
Precisely, we analyze the Heat Equation, already presented in  \cite{2012arXiv1209.0945C}, giving the exact expression of the Gabor matrix which leads to better numerical evaluations.
Then, using asymptotic integration techniques, we obtain  an upper  bound for the Gabor matrix in one-dimension for the generalized Heat Equation,  new in the literature. Using Maple software, we show numeric representations of the coefficients' decay.
Finally, we show the super-exponential decay of the coefficients of the  Gabor matrix for the Harmonic Repulsor, together with some numerical evaluations.
This work is the natural prosecution of the ideas presented in \cite{2012arXiv1209.0945C} and \cite{MR2502369}. \vspace{3em} 
\end{abstract}
\maketitle

\section{Introduction}
The Gabor frame theory has been developed in the last fifty years in order to give a discrete time-frequency representation of a signal in the phase space.  We shall use Gabor frames to give a discrete time frequency representation of operators. This new field of research, started in \cite{2012arXiv1209.0945C, MR2502369} is the main topic of our study. For simplicity, we limit the study to Gabor frames defined on a regular lattice $\Lambda= \al\Z^{d}\times\be\Z^{d}, \al,\be >0$, but more general lattices may be considered.\\
\indent Precisely, given a window function
$\; g\in\s\backslash\{0\} $, i.e. the Schwartz class, and a lattice $\Lambda \coloneqq \al\Z^{d}\times\be\Z^{d}$ with $\al,\be>0$ , a Gabor system is defined as 
\[\mathcal{G}(g,\al,\be)\,\coloneqq\,\big\{g_{m,n} =M_{n}T_{m}g,\;  (m,n)\,\in \Lambda\big\},
\] 
where $M_{n}g(x) = \me^{2\pi i n x}g(x)$ and $T_m g(x) = g(x-m)$.
 $\mathcal{G}(g,\al,\be)$ is a Gabor frame for  $\ld$ if and only if there exist $0< A\leq B <+\infty$:\[
A\|f\|_{2}^{2}\leq \sum_{(m,n)\,\in\, \Lambda}|\la f,g_{m,n}\ra|^{2} \leq B\|f\|_{2}^{2}, \qquad \forall f \in\ld.\]
The previous inequality implies the representations 
\begin{equation}\label{gfrel1}  f = \sum_{m,n}\la f,g_{m,n}\ra \gamma_{m,n}  \qquad \mbox{or}\qquad \sum_{m,n}\la f,\gamma_{m,n}\ra g_{m,n}, \qquad \forall f\,\in\, \ld,\end{equation} where $\{\gamma_{m,n}\}$ is a Gabor frame called dual frame and \eqref{gfrel1} yields with unconditional convergence in $\ld$.\\
\indent  Gabor frames turned out to be the appropriate tool for many problems in time-frequency analysis, especially  signal processing and imaging problems with related numerical issues, see for example \cite{christensen2011gabor} \cite{
grafakos2008gabor},\cite{ onchis2010gabor}, \cite{Strohmer2006237} and the references therein.  From the theoretical point of view, Gabor frames are used to investigate  Fourier Integral Operators, see  \cite{2012arXiv1209.0945C}, \cite{MR3007846} and \cite{MR2556742}, and in particular Pseudo Differential  Operators\cite{MR2498351}, by representing such operators  as infinite matrices and studying the decay of their entries.\\
\indent Following the ideas presented in  \cite{2012arXiv1209.0945C}, we perform the Gabor decomposition of evolution operators by computing the Gabor matrix and giving numerical evidences of the coefficients' decay.\\
\indent Given a bounded operator acting on $\ld$, say $T:\ld\rightarrow \ld$, the idea is to discretize the action of $T$ on a given function $f\in \ld$. Roughly speaking, we want to give a clear meaning to the following diagram:
\newline
\begin{center}
\begin{tikzpicture}
\matrix (m) [matrix of math nodes,row sep=3em,column sep=4em,minimum width=2em]
  {
     \ld & \ld \\
     l^{2} & l^{2} \\};
  \path[-stealth]
    (m-1-1) edge node [left] {Analysis} (m-2-1)
            edge [right] node [above] {$T$} (m-1-2)
    (m-2-1.east|-m-2-2) edge node [below] {}
            node [above] {$T_{m,n,m',n'}$} (m-2-2)
    (m-2-2) edge node [right] {Synthesis} (m-1-2);
\end{tikzpicture}
\end{center}

\noindent where $T_{m,n,m',n'}$ is a matrix operator that we are about to discuss.\\ \indent  The first idea is to represent the signal $f$ via Gabor frames and then apply the operator $T$ on it. Precisely,
given $ f =\sum_{m,n}\la f,\gamma_{m,n}\ra g_{m,n}$ we have
\[Tf =T\left(\sum_{m,n}\la f,\gamma_{m,n}\ra g_{m,n}\right) = \sum_{m,n}\la f,\gamma_{m,n}\ra Tg_{m,n}.\]
On the other hand, since $Tf\in \ld$, we can decompose it as  
\[Tf =\sum_{m',n'}\la Tf,g_{m',n'}\ra \gamma_{m',n'},\]
with $\{g_{m',n'}\}_{m',n'}$ and $\{\gamma_{m',n'}\}_{m',n'}$ Gabor frame and dual frame respectively.\\
Putting together these relations, one gets:
\begin{align}
Tf 	&= \sum_{m',n'}\la Tf,g_{m',n'}\ra \gamma_{m',n'}\nonumber = \sum_{m',n'}\la \sum_{m,n}\la f,\gamma_{m,n}\ra Tg_{m,n},g_{m',n'}\ra \gamma_{m',n'}\nonumber\\
	&= \sum_{m,n,m',n'}\la Tg_{m,n},g_{m',n'}\ra \la f,\gamma_{m,n}\ra \gamma_{m',n'}.\label{eqI1}
\end{align}
We have something similar to a ``kernel operator''. Indeed, using the notations of the diagram above, the kernel is  $T_{m,n,m',n'}  =\la Tg_{m,n},g_{m',n'}\ra $ and $ \la f,\gamma_{m,n}\ra \gamma_{m',n'}$ is somehow similar to the expansion of $f$ in terms of the dual frame $\gamma$.
Therefore, we can analyze the action of the operator $T$ by studying the behavior of this infinite matrix which we call the Gabor matrix of $T$.\\
\indent
The main theoretical topics on which we base our numerical evaluation are the sparsity results for the Gabor matrix of different classes of Fourier Integral Operator proved in  \cite{2012arXiv1209.0945C} and \cite{MR2556742}. According to \cite{MR2477144}, given a discrete representation of the operator $T$ of the form
\[Tf_k = \sum_{j} T_{j,k}f_j,\]
where $(T_{j,k})_{j,k}$ is a  $N\times N$ matrix,  one requires $o(N^2)$ operations to calculate $Tf_k $.
If $T$ is diagonal, the computational effort reduces to $o(N)$.\\
\indent In order to speed up the computations, we can select only the coefficients of the matrix that are relevant in the following sense: we select a small parameter $\varepsilon>0$ and consider only the coefficients $T_{j,k}$ satisfying $|T_{j,k}|>\varepsilon$. Hence, when the matrix shows off-diagonal decay, for example
\[|T_{j,k}|\leq C\left(1 + |j-k|^{2}\right)^{-\frac{N}{2}}, \] there exists $R(\varepsilon)>0$ such that $|T_{j,k}|>\varepsilon$ for $|j-k|< R(\varepsilon)$. In this case, the computational effort reduces to $o\left(N\cdot R(\varepsilon)\right)$ and if the off-diagonal decay is heavy, then $R(\varepsilon)$ can be chosen small.\\ \indent  The arguments above can be adapted to a ``well-organized'' matrix in the sense of \cite{MR2502369}. Precisely, given a canonical transformation $\chi:\R^{2d}\longrightarrow \R
^{2d}$ and an operator $T$ such that 
\[|T_{j,k}|\leq C\left(1 + |j-\chi(k)|^{2}\right)^{-\frac{N}{2}},\]  we reach fast decay far from the graph of a function $\chi$. Hence, there exists $R_{\chi}(\varepsilon)>0$ such that $|T_{j,k}|>\varepsilon$ for $|j-\chi(k)|<R_{\chi}(\varepsilon)$.\\
\indent Thus, using $\eqref{eqI1}$, the more decay ``away from the diagonal'' the more efficient is the representation of the operator T.  
Therefore, it is important to give a precise expression for the Gabor matrix and provide estimates for the coefficients' decay.\\
\indent 
We calculate the Gabor matrix for three well-known PDEs, i.e. the Heat equation, the so-called generalized Heat equation and the Harmonic Repulsor. 
The Heat equation was formerly investigated in \cite{2012arXiv1209.0945C}, we improve their estimates by giving the exact formulation of the Gabor matrix. This yield to an improvement of the coefficients'  decay rate, when compared with \cite{2012arXiv1209.0945C}.
The analysis of the generalized Heat equation is more intriguing and completely new in the literature.
We find an upper bound for the Gabor matrix exploiting all the constants that are significant from a numerical point of view.
Finally, we analyze the case of the so-called  Harmonic Repulsor giving the exact representation of the Gabor matrix. This new result yields to a super exponential decay for the coefficient that is similar to the ones of the Harmonic Oscillator, treated in \cite{MR2502369}.\\
\indent
The paper is organized as follows: we give a brief review of the main tools of time frequency analysis used in this paper, namely Gabor frames and Gelfand-Shilov classes. Using these arguments we recall the main result of \cite{2012arXiv1209.0945C} on the super exponential decay of the Gabor matrix for Pseudo Differential Operators. 
Then we introduce the definition of the metaplectic representation used to analyze the Cauchy problem for the Harmonic Repulsor.
We conclude the preliminaries by proving an interesting result of classical asymptotic integration, precisely we give an estimate for the Fourier transform of \[f = \me^{-\alpha x^{2k}}, x\in\R, \al>0,k\in \Z_{+}.\] \\ \indent Section 3 is devoted to the Heat Equation and generalizations. We start from the Cauchy problem for the Heat equation, namely
\begin{equation}
\begin{split}
&\p_{t}u -\rho\Delta_{x} u = 0, \qquad (t,x)\,\in\R\times\R^{d},\\
& u(0,x) = u_{0}(x).
\end{split}
\end{equation}
The solution can be written as
\begin{equation}u(t,x) = \sigma_{\rho}(t,D)u_{0}(x),\end{equation}
where $\sigma_{\rho}(t,D)$ is the family of Fourier multipliers 
\begin{equation}\sigma_{\rho}(t,D)f(x) = \int_{\R^{d}}\me^{2\pi i x\om} \sigma_{\rho}(t,\om)\f(\om)\ud \om,\end{equation}
with symbol 
\begin{equation}\sigma_{\rho}(t,\om) = \me^{-4\rho t \pi^{2}|\om|^{2}}, \quad \om\in\R^{d},\end{equation}
where $t\in\R$ represent the time variable and $\rho>0$ is the thermal diffusion parameter.
Taking the Gaussian as window function $g$ for the Gabor frame  we obtain a Gabor matrix satisfying
\begin{align}\begin{split}
&\left|\la \sigma_{\rho}(t,D)\pi(m,n)g , \pi(m',n')g\ra\right| = \\
&(2+4\pi\rho t )^{-\frac{d}{2}} \me^{  -\pi\left [|n|^2+|n'|^2+\frac{1}{2+4\pi\rho t}\left(|m-m'|^2+|n+n'|^2\right)\right]}.
\end{split}
\end{align}
Using this relation, we obtain faster decay of the coefficient of those obtained in \cite{2012arXiv1209.0945C} as proved in Section 5 with some numerical implementation.\\
\indent In the second part of the section, we find a bound for the decay of the coefficients of the Gabor matrix for the generalized Heat equation using the asymptotic theory developed in the preliminaries.
We obtain results that are consistent with the ones  proved in \cite{2012arXiv1209.0945C} but here we make explicit the constants involved in the calculations and give a more precise information about the decay of the coefficients.
The main result of the section reads 
\begin{equation}\vert \la \sigma_{k}(t,D)\pi(\lambda)g,\pi(\nu)g\ra\vert  \mc C_{t,k}\me^{ -\tilde{\varepsilon}_{t,k} 2^{-\frac{1}{s}}(\lambda-\nu) ^{\frac{1}{s}}}, \end{equation}
with $s = \frac{2k}{2k-1}$, $\tilde{\varepsilon}_{k,t} = \left(\frac{2k-1}{2k}\right)\left(\frac{1}{2k t}\right)^{\frac{1}{2k-1}}2^{-\frac{k}{2k-1}} $ and $C_{t,k} =  |2 k t|^{\frac{k-1}{2k-1}}$ and a suitable constant $C_{0}$ that does not depend on $t$ and $k$. Again, $\sigma_{k}(t,D)$ is the family of Fourier multiplier depending on $t$ and $k$ that solves 
\begin{equation}
\begin{split}
&\p_{t}u -\Delta^{k}_{x} u = 0, \qquad (t,x)\,\in\R\times\R,\, k\geq 1\\
& u(0,x) = u_{0}(x).
\end{split}
\end{equation}
The symbol related to the Fourier multiplier $ \sigma_{k}(t,D)$ reads
\begin{equation}
\sigma_{k}(\om) = \me^{ (-1)^{k}t (2\pi\om)^{2k}}, \quad \om\in\R.
\end{equation}
\indent Finally, the last equation to be considered is the Cauchy problem for the Harmonic Repulsor, or Harmonic Repulsive Oscillator \cite{MR578097}, namely
\begin{equation}
\begin{split}
&i\p_t u -\frac{1}{4\pi} \Delta u + \pi |x|^2= 0,\qquad (t,x) \in \R\times \R^{d},\\
&u(0,x) = u_0(x).
\end{split}
\end{equation}
The solution can be written as
\begin{equation}
u(t,x) = T_{t}\,u_{0}(x),
\end{equation}
where $T_{t}$ is the family of metaplectic operators given by
\begin{equation}
T_{t}f(x)=\frac{1}{\cosh(t)}\int_{\R^d} \me^{ \pi i \tanh(t)x^2 + \frac{2\pi i x \om}{\cosh(t)}-\pi i \tanh(t)\om^2} \f(\om)\ud \om.
\end{equation}
The treatment of the Harmonic Repulsor is similar to the Harmonic Oscillator, already treated in \cite{cordero2012time}.
We perform a direct calculus of the Gabor matrix obtaining the following super-exponential decay
\begin{align}\begin{split} &T_{m,n,m',n'} =C_t\, \me^{ -\frac{\pi}{2}\left[|m|^2+|n|^2+|n'|^2+|m'|^2 + 2\tanh(mn-m'n') -2(mm'-nn')\right]},\end{split}\end{align}
 where $C_t =  \frac{\me^{ i\psi}}{(2\cosh(t))^{\frac{d}{2}}} $ and $|\me^{ i\psi}|=1$.
\indent The last section is entirely devoted to the numerical simulation in which we give an evidence of the theoretical results by showing Maple's plot of the coefficients' decay and the spectrogram of the Gabor  matrices associated to each of the three problems. 
\subsection*{Notations}
The Schwartz class is denoted by  $\s$, the space of
tempered distributions by  $\st$.   We use the brackets $\la f,g\ra$ for the extension to $\s\times\st$ of the inner product $\la f,g\ra=\int f(t){\overline
{g(t)}}dt$ on $L^2(\R^d)$.
The scalar product on $\R^d$ is given by $xy$ for $x,y\;\in\R^d$.
The Fourier transform is normalized to be ${\hat{f}}(\omega)=\F f(\om)=\int f(t)\me^{-2\pi i t\om}dt$.
The Modulation and Translation Operators $M$ and $T$ are defined by $M_\om g(t) = \me^{2\pi i \om t}$ 
and $T_x g(t) = g(t-x)$.
Time Frequency-Shifts are denoted $M_\om T_x g(t)  = \pi(z)g$, with $z = (x,\om)\in\R^{2d}$.
The Euclidean norm of $x\in \R^d$ is given by $|x|= (x_1^2+\dots+x_d^2)^{1/2}$.
Let $\al\in\;\Z_{+}^d$ be a  multiindex. The length of $\alpha$ is $|\al| = \al_1+\dots+\al_d$. For $x\in\R^d$ the power is represented by $x^{\al}=x_1^{\al_1}\cdots x_d^{\al_d} $  
The operator of partial differentiation $\p$ is given by
\[\p^\al = \p_x^\al =\p^{\al_1}_{x_1}\cdots\p^{\al_d}_{x_d} \]
for $x\in\R^d$ and $\al\in\;\Z^d_{+}$.
The letter $C$ denotes a positive constant, not necessarily the same at every appearance.
Throughout the paper, we shall use the notation $A\lesssim B$ to
indicate $A\leq c B$ for a suitable constant $c>0$, whereas $A
\asymp B$ if $A\leq c B$ and $B\leq k A$, for suitable $c,k>0$.
We denote the  $\delta_{ij} = \begin{cases} 1, \quad i=j\\0, \quad i\neq j\end{cases}.$
We denote also $\mathcal{M}(2d,\R)$ for the set of $2d\times2d$ real-valued matrices while $GL(2d,\R)$ is the general linear group over $\mathcal{M}(2d,\R)$.
\section{Preliminaries}
\subsection{Gabor Frames}\label{GAFR}
We recall the basic concepts of the Gabor Frame theory and refer the reader to \cite{MR1843717} and \cite{MR2744776} for the details. The Gabor Frames are used to give a discrete Time-Frequency representation of signal and operators. 
Let $\Lambda := \al\Z^{d}\times\be\Z^{d}$ be a lattice on the phase space,  with $\al,\be\, >\, 0$ lattice parameters. The set of time-frequency shifts
\[\mathcal{G}(g,\al,\be) = \{\pi(\lambda)g,\, \lambda\,\in\,\Lambda\},\]
 is called a \emph{Gabor system} for $g\in\ld\backslash \{0\}$.
The Gabor system $\mathcal{G}(g,\al,\be) $ is a (Gabor) frame for $\ld$ if there exist constants $A,B> 0 $ such that 
\begin{equation}
\label{Frame_rel}
A\|f\|^2_2 \leq \sum_{\lambda\in\Lambda} |\la f,\pi(\lambda)g\ra|^2 \leq B\|f\|^2_2.
\end{equation}
If $\eqref{Frame_rel}$ holds, 
then there exists a dual window $\gamma\in\ld$, such that $\mathcal{G}(\gamma,\al,\be) $ is a frame for $\ld$ and every $f\in\ld$ can be expanded as 
\begin{equation}\label{GabExp}
f:=\sum_{\lambda\in\Lambda}\la f,\pi(\lambda)g\ra \pi(\lambda)\gamma  = \sum_{\lambda\in\Lambda}\la f,\pi(\lambda)\gamma\ra \pi(\lambda)g,
\end{equation} 
with unconditional convergence in $\ld$. 
We shall work with Gabor frames with the window $g= \me^{-\pi|x|^2},\, x\in\R^d$, i.e. a Gaussian function. Indeed, the following result characterizes Gabor frames with a Gaussian function as window.
\begin{prop}
Let $g = \me^{-\pi|x|^2}$, $x\in\R^d$. Then $\mathcal{G}(g,\al,\be)$ is a Gabor Frame for $\ld$ if and only if $\alpha\beta<1$.
\end{prop}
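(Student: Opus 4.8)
The plan is to reduce to one dimension and then exploit the classical correspondence, due to Lyubarskii and to Seip and Wallst\'en, between Gabor frames with a Gaussian window and sampling sequences in the Bargmann--Fock space of entire functions. First I would note that both the window and the lattice factorize: since $g(x)=\me^{-\pi|x|^{2}}=\prod_{j=1}^{d}\me^{-\pi x_{j}^{2}}$ and $\Lambda=\al\Z^{d}\times\be\Z^{d}$ splits as a product of $d$ copies of the one-dimensional lattice $\al\Z\times\be\Z$, the system $\mathcal{G}(g,\al,\be)$ on $\ld$ is the $d$-fold tensor product of the one-dimensional Gabor system. A tensor product of frames is a frame, with frame bounds equal to the products of the one-dimensional bounds, and conversely the $d$-dimensional system cannot be a frame if any factor fails to be one. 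Hence it suffices to treat $d=1$.

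In dimension one I would use the Bargmann transform $B\colon\ld\to\mathscr{F}^{2}(\C)$, an isometric isomorphism onto the Fock space of entire functions $F$ with $\int_{\C}|F(z)|^{2}\me^{-\pi|z|^{2}}\,\ud z<\infty$. The short-time Fourier transform against the Gaussian satisfies the pointwise identity
\[
\bigl|\la f,\pi(x,\om)g\ra\bigr|=\me^{-\frac{\pi}{2}(x^{2}+\om^{2})}\,|Bf(z)|,\qquad z=x+i\om ,
\]
so, writing $z_{\lambda}$ for the image of $\lambda\in\Lambda$ under the identification $\R^{2}\cong\C$, the frame inequality \eqref{Frame_rel} becomes exactly the weighted sampling inequality
\[
A\|F\|_{\mathscr{F}^{2}}^{2}\le\sum_{\lambda\in\Lambda}|F(z_{\lambda})|^{2}\me^{-\pi|z_{\lambda}|^{2}}\le B\|F\|_{\mathscr{F}^{2}}^{2},\qquad F\in\mathscr{F}^{2}(\C).
\]
Therefore $\mathcal{G}(g,\al,\be)$ is a frame for $\ld$ if and only if the lattice $\{z_{\lambda}\}$ is a set of sampling for the Fock space.

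At this point I would invoke the density characterization of sampling sequences in $\mathscr{F}^{2}(\C)$: a lattice is sampling precisely when its lower Beurling density strictly exceeds the critical (Nyquist) value. In the normalization fixed by the weight $\me^{-\pi|z|^{2}}$ the density of the rectangular lattice equals $1/(\al\be)$ and the critical value is $1$, so sampling holds if and only if $1/(\al\be)>1$, that is $\al\be<1$. The upper (Bessel) bound holds for every $\al,\be$ because the Gaussian lies in the Feichtinger algebra; what is at stake is the lower bound. The necessity $\al\be\le1$ also follows from the general density theorem for Gabor frames, while the failure of the lower bound at the critical density $\al\be=1$ --- where the Gaussian system is complete but the Weierstrass $\sigma$-function vanishing on the lattice destroys any uniform lower estimate --- is the delicate borderline case that forces the strict inequality.

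The main obstacle is the sufficiency direction, namely that $\al\be<1$ actually makes $\{z_{\lambda}\}$ a sampling sequence. This is the deep part of the argument and relies on entire-function techniques --- Hadamard/Weierstrass products, Jensen-type estimates, and the construction of interpolating sequences at the critical density --- developed by Seip and Wallst\'en; by contrast the upper bound and the necessity $\al\be\le1$ are comparatively elementary. As the statement is classical, in practice I would cite these sources rather than reproduce the full entire-function machinery.
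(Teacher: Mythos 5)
Your proposal is correct and takes essentially the same route the paper itself relies on: the paper offers no proof of this proposition, deferring exactly to \cite[Theorem 7.5.3]{MR1843717} and the Lyubarskii and Seip--Wallst\'en papers \cite{MR1173117,MR1173118}, which establish the result through the Bargmann--Fock sampling characterization you sketch. Your tensor-product reduction to $d=1$ and your final deferral to the entire-function machinery for the sufficiency direction match the content of those cited sources.
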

See \cite[Theorem 7.5.3]{MR1843717} and \cite{MR1173117,MR1173118} for the details.
\subsection{Gelfand Shilov Spaces}
The Gelfand-Shilov spaces are subspaces of the Schwartz class $\s$ introduced in \cite{MR0435832} to give  information about how fast a function $f\in\s$ and its derivatives decay at infinity. We recall only the main properties used in this paper, the main references on this topic are \cite{MR0435832} and\cite{Ro_nic}.
Given $s,r\geq 0$, $f$ is in the Gelfand-Shilow type space $S^s_r (\R^d)$ if there exist constants $A,B>0$ such that \[
|x^{\al}\p^{\be}f(x)| \mc A^{|\al|}B^{|\be|} (\al !)^r (\be !)^s, \; \al,\be,\in\Z_+^d.\]
The space $S^s_r (\R^d)$ is non-trivial if and only if $r+s > 1$ or $r+s = 1$ and $r,s >0$. 
It can be shown that the smallest non-trivial space with $r=s$ is given by $S^{1/2}_{1/2} (\R^d)$. 
We observe that $S^{s_1}_{r_1} (\R^d)\subset S^{s_2}_{r_2} (\R^d)$ for $s_1\leq s_2$ and $r_1\leq r_2$.
Moreover, we have : \[f \in S^{s}_{r} (\R^d) \Longleftrightarrow \f \in S^{r}_{s} (\R^d).\]
Therefore the spaces $S^{s}_{s} (\R^d)$ are invariant under the action of the Fourier Transform.
Functions of type $f(x)= \me^{ -a|x|^2}$, with $a>0$ belong to $S^{1/2}_{1/2} (\R^d)$. 
An equivalent condition for $f\in \s$ to be in $S^{s}_{r} (\R^d)$ is as follows:
\begin{prop}\label{GS-prop}
Let $r,s >0$ and $r+s\geq 1 $. For $f\in\s$ the following conditions are equivalent:
\begin{itemize}
\item[a)] $f \in S^{s}_{r} (\R^d)$;\\
\item[b)] There exist positive constants $h,k$ such that 
\[\|f\me^{h|x|^{1/r}}\|_{{\infty}} <\infty\quad\textrm{and}\quad \|\f \me^{k|\om|^{1/s}}\|_{{\infty}} <\infty.\]
\end{itemize}
\end{prop}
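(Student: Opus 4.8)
The plan is to exploit the classical interplay between factorial-weighted polynomial bounds and stretched-exponential decay, the bridge between the two being Stirling's formula. The engine of the whole argument is an elementary optimization that I would isolate first: for any measurable $u$ on $\R^d$, the family of bounds $\sup_x|x^{\al}u(x)|\mc C\,A^{|\al|}(\al!)^r$, holding for every multi-index $\al$, is equivalent to the pointwise estimate $|u(x)|\mc C'\me^{-h|x|^{1/r}}$. In one direction one minimizes $A^{|\al|}(\al!)^r/|x^{\al}|$ over $\al$; writing $n=|\al|$ and using $n!\sim(n/\me)^n$, the infimum is attained near $n\sim(|x|/A)^{1/r}$ and produces the factor $\me^{-h|x|^{1/r}}$ with $h\sim r/A^{1/r}$. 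In the other direction one maximizes $|x^{\al}|\me^{-h|x|^{1/r}}$ in $x$ and recognizes, again by Stirling, the bound $A^{|\al|}(\al!)^r$. The same statement holds verbatim on the frequency side with $r$ replaced by $s$.

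For the implication $a)\Rightarrow b)$ I would proceed as follows. Taking $\be=0$ in the defining inequality for $S^s_r(\R^d)$ gives exactly $\sup_x|x^{\al}f(x)|\mc A^{|\al|}(\al!)^r$, and the optimization above yields the first half of $b)$, namely $\|f\me^{h|x|^{1/r}}\|_{\infty}<\infty$. For the Fourier side I would pass the derivative estimate through $\F$: since $\om^{\be}\f(\om)=(2\pi i)^{-|\be|}\F(\p^{\be}f)(\om)$, I can bound $\sup_{\om}|\om^{\be}\f(\om)|\le\|\p^{\be}f\|_{L^1}$, and I estimate $\|\p^{\be}f\|_{L^1}$ by extracting from the defining inequality enough $x^{\al}$-decay to make an integrable weight $(1+|x|)^{-M}$ appear; this costs only a larger constant and leaves the factor $B^{|\be|}(\be!)^s$ intact. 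Applying the optimization lemma on the frequency side then gives $\|\f\me^{k|\om|^{1/s}}\|_{\infty}<\infty$, completing $b)$. (Equivalently, one invokes $f\in S^s_r\Leftrightarrow\f\in S^r_s$, recalled above, and repeats the first computation for $\f$.)

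The reverse implication $b)\Rightarrow a)$ is the hard part. By the converse half of the optimization lemma, the decay of $f$ gives $\sup_x|x^{\al}f|\mc A^{|\al|}(\al!)^r$ and the decay of $\f$ gives $\sup_{\om}|\om^{\be}\f|\mc B^{|\be|}(\be!)^s$, the latter being equivalent (by inverting $\F$ and again bounding an $L^1$-norm) to $\sup_x|\p^{\be}f|\mc B'^{|\be|}(\be!)^s$. The genuine difficulty is that the definition of $S^s_r(\R^d)$ demands the \emph{joint} estimate on $x^{\al}\p^{\be}f$, whereas so far I control only $f$ and its derivatives \emph{separately}. To manufacture the missing $x^{\al}$ factor I would start from $\p^{\be}f(x)=\int(2\pi i\om)^{\be}\f(\om)\me^{2\pi i x\om}\,\ud\om$, write $x^{\al}\me^{2\pi i x\om}=(2\pi i)^{-|\al|}\p_{\om}^{\al}\me^{2\pi i x\om}$, and integrate by parts in $\om$. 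A Leibniz expansion of $\p_{\om}^{\al}[(2\pi i\om)^{\be}\f(\om)]$ produces finitely many terms of the form $\om^{\be-\gamma}\p^{\al-\gamma}\f(\om)$ with $\gamma\le\min(\al,\be)$; each is controlled by the two separated estimates for $\f$ (its polynomial decay together with the decay of its derivatives, i.e. $\f\in S^r_s$), while the combinatorial factors $\binom{\al}{\gamma}\be!/(\be-\gamma)!$ are absorbed into enlarged constants $A'^{|\al|}B'^{|\be|}$. Finally I bound the $L^{\infty}$-norm of the resulting integral by pulling out a little extra decay of $\f$ to guarantee integrability, sum the Leibniz terms, and match the factorials to the target $(\al!)^r(\be!)^s$. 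The main obstacle, and the place where the hypothesis $r+s\geq1$ enters to keep these estimates mutually consistent, is exactly this recovery of the joint bound from the two one-sided ones; everything else is bookkeeping with Stirling's formula.
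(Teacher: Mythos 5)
First, for calibration: the paper does not prove this proposition at all --- it is quoted verbatim from \cite[Theorem 6.1.6]{Ro_nic} --- so your attempt has to stand on its own. Your direction a) $\Rightarrow$ b) is correct: the optimization of $A^{|\al|}(\al!)^r/|x^{\al}|$ via Stirling is the standard equivalence between factorial moment bounds and stretched-exponential decay, and passing $\p^{\be}$-estimates through $\F$ with an $L^1$-weight costs only constants. Likewise, in b) $\Rightarrow$ a), the two \emph{separate} families of estimates $\sup_x|x^{\al}f(x)|\mc A^{|\al|}(\al!)^r$ and $\sup_x|\p^{\be}f(x)|\mc B^{|\be|}(\be!)^s$ are correctly derived from the hypotheses.

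The gap is in the last step of b) $\Rightarrow$ a), and it is not bookkeeping. After integrating by parts and expanding by Leibniz, you must bound terms of the form $\om^{\be-\gamma}\p^{\al-\gamma}\f(\om)$ in a way that produces $A^{|\al|}B^{|\be|}(\al!)^r(\be!)^s$ after integration; that is, you need the \emph{joint} weighted estimates $|\om^{\mu}\p^{\nu}\f(\om)|\mc B^{|\mu|}A^{|\nu|}(\mu!)^s(\nu!)^r$. You justify this by saying each term "is controlled by the two separated estimates for $\f$ \dots i.e. $\f\in S^r_s$". But $\f\in S^r_s$ is, by the Fourier invariance you yourself invoke, \emph{equivalent} to $f\in S^s_r$ --- the statement being proven --- so the argument is circular exactly at its crux. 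Moreover, the separated estimates genuinely do not imply the joint one by themselves: knowing $\sup_{\om}|\om^{\mu}\f(\om)|$ and $\sup_{\om}|\p^{\nu}\f(\om)|$ says nothing quantitative about the \emph{decay} of $\p^{\nu}\f$, which is what the product requires. Recovering the joint bound from the one-sided ones is a theorem in its own right (Kashpirovskii's identity $S_r\cap S^s=S^s_r$ for $r+s\geq 1$, which underlies the cited Theorem 6.1.6), and its proof needs an idea absent from your sketch: Landau--Kolmogorov/Taylor-type interpolation inequalities converting decay of $\f$ together with sup-norm bounds on higher derivatives into decay of intermediate derivatives, with the losses tracked so that they stay of size $C^{|\al|+|\be|}$ and the factorials recombine into $(\al!)^r(\be!)^s$; it is precisely in this interpolation that the hypothesis $r+s\geq 1$ does real work. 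As written, your proposal proves the easy direction and, at the decisive point of the hard direction, restates the conclusion instead of proving it.
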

\noindent This result is contained in \cite[Theorem 6.1.6]{Ro_nic}.\\
\indent The Gelfand-Shilow spaces were used in \cite{2012arXiv1209.0945C} as symbol space to characterize  the behavior  of the Gabor matrix of the corresponding pseudodifferential operators. Precisely, the un-weighted version of \cite[Theorem 4.2]{2012arXiv1209.0945C} can be stated as follows:
\begin{theorem}
\label{G-S decay}
Let $s\geq \frac{1}{2},\, g\in S^s_s(\R^d)$ and $\sigma\in\C^{\infty}(\R^d)$.  Then the following properties are equivalent:
\begin{enumerate}
\item[a)] The symbol $\sigma$ satisfies
\begin{equation}\label{GS_symb}
|\p^{\alpha}\sigma(z)|\mc C^{|\alpha|}(\al!)^s, \quad \forall\, z\in\R^{2d}, \forall\, \al\in\Z^{2d}_+ ;
\end{equation}
\item[b)] There exists $\varepsilon>0$ such that 
\begin{equation}
\label{GS_ineq}
|\la \sigma^{W}(\pi(z)g,\pi(w)g)\ra|\mc \me^{-\varepsilon|w-z|^{\frac{1}{s}}}, \quad \forall z,w\in\R^{2d}.
\end{equation}
where $\sigma^{W}(x,D)$ indicates  the Weyl quantization defined as
\[\sigma^{W}(x,D) f(x):= \int \me^{2\pi i (x-y)\om}\sigma\left(\frac{x+y}{2},\om\right)f(y)\ud y \ud \om.\]
\end{enumerate}  
\end{theorem}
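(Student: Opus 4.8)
The plan is to convert the matrix coefficient into a short-time Fourier transform (STFT) of the symbol and then reduce the theorem to a Fourier-analytic characterization of the Gevrey-type condition \eqref{GS_symb}. First I would recall the standard identity for Weyl operators: for $f,h\in\ld$ one has $\la \sigma^{W}f,h\ra = \la \sigma,\W(h,f)\ra$, where $\W(h,f)$ is the cross-Wigner distribution. Applying this with $f=\pi(z)g$ and $h=\pi(w)g$ and invoking the covariance of the Wigner distribution under time-frequency shifts, $\W(\pi(w)g,\pi(z)g)$ equals, up to a phase of modulus one, a time-frequency shift of $\Phi:=\W(g,g)$ with space parameter $\frac{w+z}{2}$ and frequency parameter $J(w-z)$, where $J$ is the standard symplectic matrix $J(x,\om)=(\om,-x)$. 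Pairing against $\sigma$ then yields
\[
|\la \sigma^{W}\pi(z)g,\pi(w)g\ra| = \left|V_{\Phi}\sigma\!\left(\tfrac{w+z}{2},\,J(w-z)\right)\right|,
\]
with $V_{\Phi}\sigma(u,\zeta)=\int \sigma(y)\overline{\Phi(y-u)}\me^{-2\pi i \zeta y}\ud y$. Since $J$ is orthogonal, $|J(w-z)|=|w-z|$, so \eqref{GS_ineq} is equivalent to the statement that $V_{\Phi}\sigma(u,\zeta)$ decays like $\me^{-\varepsilon|\zeta|^{1/s}}$ uniformly in $u$. Because $g\in S^{s}_{s}(\R^{d})$, the window $\Phi=\W(g,g)$ belongs to $S^{s}_{s}(\R^{2d})$, so $\Phi$ and all its derivatives decay like $\me^{-c|y|^{1/s}}$ and obey Gevrey bounds of order $s$. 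The problem is thereby reduced to characterizing the order-$s$ Gevrey regularity of $\sigma$ through the frequency decay of its STFT.

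For a) $\Rightarrow$ b) I would estimate $V_\Phi\sigma$ by repeated integration by parts. Integrating by parts $N$ times in $y$ against the factor $\me^{-2\pi i \zeta y}$ produces
\[
|V_{\Phi}\sigma(u,\zeta)| \mc |2\pi\zeta|^{-N}\int \left|\p_{y}^{N}\!\left[\sigma(y)\overline{\Phi(y-u)}\right]\right|\ud y.
\]
Expanding by Leibniz and inserting the Gevrey bound \eqref{GS_symb} on $\sigma$ together with the Gelfand-Shilov bounds on $\Phi$ (which also furnish the integrability in $y$), the $N$-th derivative is controlled by $C^{N}(N!)^{s}$; hence $|V_{\Phi}\sigma(u,\zeta)|\mc (C/|\zeta|)^{N}(N!)^{s}$ for every $N$, uniformly in $u$. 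Optimizing over $N$, i.e. choosing $N\asymp|\zeta|^{1/s}$, converts the Gevrey growth into the super-exponential decay $\me^{-\varepsilon|\zeta|^{1/s}}$, which is \eqref{GS_ineq}.

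For the converse b) $\Rightarrow$ a) I would use the STFT inversion formula, writing $\sigma(y)=\|\Phi\|_{2}^{-2}\iint V_{\Phi}\sigma(u,\zeta)\,\me^{2\pi i \zeta y}\Phi(y-u)\,\ud u\,\ud\zeta$, and differentiate under the integral sign. Each $\p^{\al}$ falls on $\me^{2\pi i \zeta y}$ and on $\Phi(y-u)$, producing powers $\zeta^{\be}$ with $|\be|\le|\al|$ times derivatives of $\Phi$; using the decay of $\Phi$ to perform the $u$-integration leaves the moment integrals $\int |\zeta|^{|\al|}\me^{-\varepsilon|\zeta|^{1/s}}\ud\zeta$. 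The standard estimate $\int_{\R^{2d}} |\zeta|^{k}\me^{-\varepsilon|\zeta|^{1/s}}\ud\zeta \mc C^{k}(k!)^{s}$ (obtained by the substitution $|\zeta|=t^{s}$ and Stirling) then produces exactly the symbol bound $|\p^{\al}\sigma(y)|\mc C^{|\al|}(\al!)^{s}$ of \eqref{GS_symb}.

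The main obstacle is the uniform bookkeeping of the two Gevrey exponents so as to reproduce the precise orders $s$ and $1/s$ in both directions: one must check that the Leibniz expansion of $\p_{y}^{N}[\sigma\cdot\overline{\Phi(\cdot-u)}]$, which mixes the order-$s$ bounds on $\sigma$ with the order-$s$ bounds and decay on $\Phi$, still closes at $C^{N}(N!)^{s}$ with a constant uniform in $u$, and dually that the moment integrals grow no faster than $(k!)^{s}$. Verifying $\W(g,g)\in S^{s}_{s}(\R^{2d})$ for $g\in S^{s}_{s}(\R^{d})$ and confirming the cross-Wigner covariance with the correct symplectic argument $J(w-z)$ are the remaining ingredients; both are routine but must be arranged so that the optimization over $N$ and the moment estimate exactly match the claimed exponents.
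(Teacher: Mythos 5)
The paper itself offers no proof of this theorem: it is quoted as the unweighted version of Theorem 4.2 of \cite{2012arXiv1209.0945C}, so the only comparison available is with the proof in that reference. Your proposal is correct and follows essentially that same route: the reduction $|\la \sigma^{W}\pi(z)g,\pi(w)g\ra| = \left|V_{\Phi}\sigma\left(\frac{w+z}{2},J(w-z)\right)\right|$ with $\Phi=\W(g,g)\in S^{s}_{s}(\R^{2d})$, then integration by parts with optimization over $N$ for a) $\Rightarrow$ b), and STFT inversion with Gamma-function moment estimates for b) $\Rightarrow$ a). The only points to pin down in a full write-up are the ones you already flag, plus the minor observation that, since $\zeta\in\R^{2d}$ is a vector, the integration by parts should be performed in a coordinate direction $j$ with $|\zeta_{j}|\gtrsim|\zeta|$; none of these affect the validity of the argument.
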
 
\subsection{Metaplectic Operators}\label{section2.4}
The Metaplectic representation is a powerful tool to study certain classes of PDEs. We briefly recall the main concepts and the results needed later in this paper.
We refer to \cite{MR983366,MR1906251}.
Let $d\in\N$ being the dimension, then the Symplectic group is defined to be 
\[Sp(d,\R) = \left\{ g\in GL(2d,\R) \: \slash\: ^t\! g\mathcal{J}g = \mathcal{J}\right\}\]
where \[\mathcal{J}= \left[\begin{array}{rr}
0 & I_d \\ 
-I_d & 0
\end{array} \right].\]
Together with the group structure, it is useful to define the Symplectic algebra.
Let $d\in\N$, then the Symplectic algebra is defined to be 
\[\mathfrak{s}\mathfrak{p}(d,\R) = \left\{ \mathcal{A}\in \mathcal{M}(2d,\R) \: \slash\: \me^{t\mathcal{A}} \in  Sp(d,\R)\right\}.\]
The metaplectic representation $\mu$ is a unitary representation of (double cover of) the Symplectic group $Sp(d,\R)$ on $\ld$.
We shall study the unitary operator $\mu(t)$ giving the solution for the Harmonic Repulsor in Section  \ref{Sec4}.
Precisely \cite[Theorem 4.51]{MR983366} gives the following explicit representation for the unitary operator.
\begin{prop}
Consider $\mathcal{A} = \left(\begin{array}{rr}
A & B \\ 
C & D
\end{array} \right)\,\in\, Sp(d,\R)$ with $A\neq 0$
then
\begin{equation}\label{Metaplectic_Repr_eqt}
\mu(\mathcal{A})f(x) = (\det A)^{-1/2}\int \me^{-\pi i x\cdot CA^{-1}x + 2\pi i \om A^{-1}x + \pi i \om A^{-1}B \om}\hat{f}(\om) \ud\om
\end{equation}\end{prop}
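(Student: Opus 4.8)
The plan is to reduce the general element $\mathcal{A}$ to a product of three elementary symplectic matrices whose metaplectic images are known in closed form, and then to compose the corresponding operators. The key structural fact I would use is that the metaplectic representation is characterized, up to a unimodular constant, by the intertwining relation $\mu(\mathcal{A})\pi(z)\mu(\mathcal{A})^{-1} = \pi(\mathcal{A}z)$ with the Schr\"odinger representation $z\mapsto\pi(z)$; this is what lets one verify candidate operators generator-by-generator and then identify products. Since $\mu$ is (projectively) multiplicative, any factorization of $\mathcal{A}$ turns into a factorization of $\mu(\mathcal{A})$ up to a sign inherited from the double cover.

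First I would exploit the hypothesis that $A$ is invertible to write the explicit factorization
\[\mathcal{A} = \begin{pmatrix} I & 0 \\ CA^{-1} & I \end{pmatrix}\begin{pmatrix} A & 0 \\ 0 & {}^tA^{-1} \end{pmatrix}\begin{pmatrix} I & A^{-1}B \\ 0 & I \end{pmatrix}.\]
Checking this is pure linear algebra: on multiplying out, the bottom-right block of the product is $CA^{-1}B + {}^tA^{-1}$, and the symplectic relations ${}^tAC = {}^tCA$ and ${}^tAD - {}^tCB = I$ force $CA^{-1}B + {}^tA^{-1} = D$, while the same relations (in both forms ${}^t\mathcal{A}\mathcal{J}\mathcal{A}=\mathcal{J}$ and $\mathcal{A}\mathcal{J}\,{}^t\mathcal{A}=\mathcal{J}$) give the symmetry of $CA^{-1}$ and of $A^{-1}B$, so that the two outer factors are genuine symplectic shear matrices.

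Second, I would record the action of $\mu$ on each factor, each a one-line computation verified through the intertwining relation on the generators $T_x$ and $M_{\om}$ of the Heisenberg group:
\begin{align*}
\mu\begin{pmatrix} I & 0 \\ C' & I \end{pmatrix}f(x) &= \me^{-\pi i x\cdot C'x}f(x), \\
\mu\begin{pmatrix} A & 0 \\ 0 & {}^tA^{-1} \end{pmatrix}f(x) &= (\det A)^{-1/2}f(A^{-1}x),\\
\mu\begin{pmatrix} I & B' \\ 0 & I \end{pmatrix}f &= \mathcal{F}^{-1}\!\big[\me^{\pi i \om\cdot B'\om}\,\hat f\,\big],
\end{align*}
the last being the conjugate of a chirp by the Fourier transform $\mu(\mathcal{J})=\mathcal{F}$. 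Composing these three operators in the order dictated by the factorization, with $C' = CA^{-1}$ and $B' = A^{-1}B$, the inverse Fourier integral coming from the third factor is rescaled by the dilation factor --- this is precisely where the cross term $2\pi i\,\om\cdot A^{-1}x$ is produced --- and is then multiplied by the chirp $\me^{-\pi i x\cdot CA^{-1}x}$, reproducing exactly the integral kernel of \eqref{Metaplectic_Repr_eqt}.

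The main obstacle is not the structure but the bookkeeping of signs, phases and the branch of the square root. Because $\mu$ descends only to the double cover of $Sp(d,\R)$, the identity $\mu(\mathcal{A}) = \mu(L)\mu(M)\mu(N)$ holds only up to sign, and each elementary operator already carries its own phase (the chirp intertwiner produces a Heisenberg cocycle factor, and $\mu(\mathcal{J})$ a power of $i$); these must be collected consistently and reconciled with a fixed branch of $(\det A)^{-1/2}$. I would pin the overall unimodular constant at the very end by evaluating both sides on a single normalized Gaussian and using that $\mu$ is unitary. An alternative, more conceptual route would bypass the factorization entirely: define $U$ by the right-hand side of \eqref{Metaplectic_Repr_eqt}, check directly that $U\pi(z)U^{-1}=\pi(\mathcal{A}z)$ on $T_x$ and $M_{\om}$, and then invoke the Stone--von Neumann theorem together with Schur's lemma to conclude $U = c\,\mu(\mathcal{A})$ with $|c|=1$, again fixing $c$ on a Gaussian.
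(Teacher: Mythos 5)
The paper offers no proof of this proposition at all: it is quoted from the literature (the citation to \cite[Theorem 4.51]{MR983366} immediately preceding the statement, with a further pointer to \cite{MR1906251} after it). So your proposal should be measured against the standard proof of the cited theorem rather than against an in-paper argument, and by that measure it is correct and is essentially the canonical argument. Your factorization
$\mathcal{A} = \left(\begin{smallmatrix} I & 0 \\ CA^{-1} & I \end{smallmatrix}\right)\left(\begin{smallmatrix} A & 0 \\ 0 & {}^t\!A^{-1} \end{smallmatrix}\right)\left(\begin{smallmatrix} I & A^{-1}B \\ 0 & I \end{smallmatrix}\right)$
is valid, and the symplectic relations you invoke (${}^t\!AC = {}^t\!CA$, $A\,{}^t\!B = B\,{}^t\!A$, ${}^t\!AD - {}^t\!CB = I$) are exactly what is needed to make the two shears symplectic and to force $CA^{-1}B + {}^t\!A^{-1} = D$. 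A useful sanity check you leave implicit: your three elementary formulas are precisely the specializations of \eqref{Metaplectic_Repr_eqt} to $B=C=0$, to $C=0$, $A=I$, and to $B=0$, $A=I$, so the conventions are internally consistent, and composing them (the dilation rescales the inverse Fourier integral and creates the cross term $2\pi i\,\om A^{-1}x$, the outer chirp supplies $-\pi i\, x\cdot CA^{-1}x$, the Jacobian supplies $(\det A)^{-1/2}$) reproduces the stated kernel exactly. Your handling of the projective ambiguity is also the right one, and your ``alternative route'' (define $U$ by the right-hand side, verify $U\pi(z)U^{-1}=\pi(\mathcal{A}z)$, then apply Stone--von Neumann and Schur's lemma and fix the unimodular constant on a Gaussian) is in fact the closer of the two to how the cited source proceeds; the factorization route buys explicitness, the intertwining route buys conceptual economy, and the paper's citation buys brevity. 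Two small points to make explicit if you write this up: the hypothesis ``$A\neq 0$'' in the statement must be read as $\det A\neq 0$, since your proof (and the formula itself) needs $A^{-1}$; and the branch of $(\det A)^{-1/2}$ genuinely cannot be specified beyond a sign, because $\mu$ lives on the double cover --- so the identity you prove holds up to that sign, with your Gaussian normalization selecting one representative, which is all the proposition can claim.
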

For the general integral representation of a metaplectic operator see \cite{MR1906251}.
Consider now the following Cauchy problem 
\begin{equation}\label{PC_SC_GEN}
\begin{cases}
i\p_t u -H_{\mathcal{A}}u= 0\\
u(0,x) = u_0(x),
\end{cases}
\end{equation}
where the Hamiltonian $H_{\mathcal{A}}$ represents the Weyl quantization of a quadratic form on $\R^{2d}$.
Indeed every matrix $\mathcal{A}\in\mathfrak{s}\mathfrak{p}(d,\R)$ defines a quadratic form $\mathcal{P}(x,\om)\in\R^{2d}$ as follows:
\[\mathcal{P}(x,\om) = -\frac{1}{2} {^t\!(x,\om)} \mathcal{A}\mathcal{J}(x,\om).\]
Setting  $\mathcal{A} = \left(\begin{array}{rr}
A & B \\ 
C & ^t\! A
\end{array} \right)\,\in\, \mathfrak{sp}(d,\R)$ 
we have
\begin{equation}
\label{Eqt_form_quadr}
\mathcal{P}(x,\om) = \frac{1}{2}\om  B\om -\om Ax -\frac{1}{2}x Cx.
\end{equation}
From the Weyl quantization, the quadratic form in \eqref{Eqt_form_quadr} corresponds to the Weyl operator $\mathcal{P^{\om}_A} = \mathcal{P^{\om}_A}(D,X)$ defined by
\begin{equation}\label{Weyl_operator}
2\pi \mathcal{P^{\om}_A}= -\frac{1}{4\pi} \sum_{j,k = 1}^{d}B_{j,k}\frac{\p^2}{\p_{x_{j}x_{k}}} + i\sum_{j,k = 1}^{d}A_{j,k}x_k\frac{\p}{\p_{x_{j}}}+\frac{i}{2}\textrm{Tr}(A)-\pi\sum_{j,k = 1}^{d}C_{j,k}{x_{j}}{x_{k}.}
\end{equation}
The operator $H_{\mathcal{A}} = 2\pi \mathcal{P^{\om}_A}(D,X)$ is the Hamiltonian operator. The evolution operator which provides the solution to \eqref{PC_SC_GEN} is  then given by 
\[\me^{i t H_{\mathcal{A}}} = \mu\left(\me^{t\mathcal{A}}\right).\]
Hence, the solution to \eqref{PC_SC_GEN} is $u(t) = \me^{i t H_{\mathcal{A}}}u_0 =  \mu\left(\me^{t\mathcal{A}}\right)u_0$
\subsection{Asymptotic Integration}$\phantom{2}$\\
Throughout this section we will work in dimension $d=1$.  Consider the function $f(x) = \me^{-\al x^{2k}}, \,\al>0, k\geq 1$.\\ Let $r=\frac{1}{2k}$, it is clear that there exists $h>0$ such that $\parallel f \me^{h |x|^{\frac{1}{r}}}\parallel_\infty<+\infty$. \\\indent We shall prove that given $s = 1-\frac{1}{2k}$ there exist $\be > 0 $ such that $$\parallel \hat{f}(\om)\me^{\be|\om|^{\frac{1}{s}}}\parallel_\infty<+\infty.$$
Thus, using Proposition \ref{GS-prop}, we would obtain that $f(x)\in\mathcal{S}^{1-\frac{1}{2k}}_{\frac{1}{2k}}(\R)$ and consequently $\f\in\mathcal{S}_{1-\frac{1}{2k}}^{\frac{1}{2k}}(\R).$\\
\indent We need to represent the Fourier transform of $f$. The classic theory of Asymptotic Integration can be used to give an estimate of its behavior.
\begin{lemma}\label{Lemma1}
Let $\Omega = \R^{+}$. Given $n>0$ and $y:\R\rightarrow\C$ such that $y\in C^{n} (\R)$,  consider  $y^{(n)}$ to be the $n$-th derivative of the function $y$.   Define $r(t) = t^{\al},q(t) = \pm t^{\be}$ with $\be > -n$.
Then, given a positive constant $C$ \begin{equation}
y^{(n)}(t)-C\ q(t)\, y(t) = 0
\end{equation}
admits a fundamental system of solutions $y_{j}, \ j = 1,\ldots,n,$ such that
\begin{equation}
\label{solution}
y_{j}(t)\asymp [C(\pm t^{\be})]^{-\frac{(n-1)}{2n}} \me^{\nu_j \sqrt[n]{C} \frac{\be}{n}(\pm t)^{\frac{\beta}{n}+1}}
\end{equation}
for $t>0$ and $\nu_j$ are $n$-th roots of the unity.
\end{lemma}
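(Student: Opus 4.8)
\emph{Strategy.} The plan is to treat the scalar equation $y^{(n)} = C q(t)\, y$ as a first-order linear system and apply the classical theory of asymptotic integration (a Liouville--Green shearing transformation followed by Levinson's theorem). First I would set $Y = (y, y', \ldots, y^{(n-1)})^{t}$ and rewrite the equation as $Y' = A(t) Y$, where $A(t)$ is the companion matrix of $\mu^{n} - C q(t) = 0$. Its eigenvalues are $\mu_{j}(t) = \nu_{j}\,\lambda(t)$ with $\lambda(t) := (C q(t))^{1/n} = C^{1/n}(\pm 1)^{1/n}t^{\beta/n}$ and $\nu_{j}$ the $n$-th roots of unity, which are distinct and nonzero for $t > 0$; this already singles out the $n$ growth rates appearing in \eqref{solution}.

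Next I would perform the shearing transformation $Y = D(t) Z$ with $D(t) = \mathrm{diag}(1, \lambda, \lambda^{2}, \ldots, \lambda^{n-1})$. A direct computation gives $D^{-1}AD = \lambda(t) M$, where $M$ is the constant companion matrix of $\mu^{n}=1$, so the transformed system is $Z' = [\lambda(t)M - D^{-1}D']Z$. Since $\lambda'/\lambda = \beta/(nt)$, the perturbation $D^{-1}D' = (\beta/(nt))\,\mathrm{diag}(0,1,\ldots,n-1)$ is of order $1/t$. Diagonalizing $M = P\Lambda_{0}P^{-1}$ with $\Lambda_{0} = \mathrm{diag}(\nu_{1},\ldots,\nu_{n})$ and putting $Z = PW$ yields $W' = [\lambda(t)\Lambda_{0} + (\beta/(nt))\tilde{R}]W$ with $\tilde{R} = -P^{-1}\mathrm{diag}(0,\ldots,n-1)P$. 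I would then absorb the diagonal part of $\tilde{R}$ into the phase: since $P$ is the Vandermonde (DFT) matrix one checks each diagonal entry of $P^{-1}\mathrm{diag}(0,\ldots,n-1)P$ equals $(n-1)/2$, so $\int (\beta/(nt))\tilde{R}_{jj}\,dt$ contributes exactly the amplitude factor $t^{-\beta(n-1)/(2n)} \asymp [C(\pm t^{\beta})]^{-(n-1)/(2n)}$, while $\int \lambda\,dt$ produces the exponential phase of \eqref{solution}.

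The remaining, off-diagonal part of the perturbation is still only $O(1/t)$ and hence not integrable, so Levinson's theorem cannot be applied directly; this is the main obstacle. I would resolve it with one further near-identity transformation $W = (I + Q(t))V$, choosing the off-diagonal entries $Q_{jk}(t) \approx \tilde{R}_{jk}\,(\beta/(nt))/((\nu_{j}-\nu_{k})\lambda(t))$ so as to cancel the leading $1/t$ off-diagonal terms. Because $\lambda(t) \asymp t^{\beta/n}$, this forces $Q(t) = O(t^{-1-\beta/n})$ and $Q'(t) = O(t^{-2-\beta/n})$, so the new perturbation is $O(t^{-2-\beta/n})$, which is integrable at infinity precisely when $-2-\beta/n < -1$, i.e.\ when $\beta > -n$. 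This is exactly the hypothesis of the lemma, and it is the point where that assumption enters.

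Finally, with an integrable perturbation in hand I would verify the Levinson dichotomy condition: here $\mathrm{Re}(\mu_{j}-\mu_{k}) = \mathrm{Re}[(\nu_{j}-\nu_{k})C^{1/n}(\pm1)^{1/n}]\,t^{\beta/n}$ has constant sign (possibly identically zero), so each pairwise integral of the real-part difference is monotone or bounded. Levinson's theorem then yields a fundamental matrix of $V' = [\Lambda(t) + R_{1}(t)]V$, with $R_{1}\in L^{1}$, asymptotic to $\exp(\int \Lambda\,dt)$. Undoing the transformations $V \mapsto W \mapsto Z \mapsto Y$ and reading off the first component gives the fundamental system $y_{j}$ with the asymptotics \eqref{solution}. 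As an alternative to writing out Levinson's theorem in full, one could instead quote a ready-made asymptotic-integration result from the literature and simply check that the power weight $q(t) = \pm t^{\beta}$ with $\beta > -n$ meets its regularity and integrability hypotheses.
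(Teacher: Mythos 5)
The paper does not actually prove this lemma: it is quoted as a known result, with the proof deferred entirely to the cited literature on asymptotic integration (Eastham's book and the related references). So there is no internal argument to compare yours against; what you have written is, in effect, the standard proof contained in those references, and your outline of it is sound. The chain companion system $\to$ shearing $D=\mathrm{diag}(1,\lambda,\dots,\lambda^{n-1})$ $\to$ diagonalization of the constant companion matrix $\to$ near-identity correction $(I+Q)$ $\to$ Levinson's theorem is exactly how this Liouville--Green-type formula for $y^{(n)}=Cq\,y$ is established. You also correctly isolate the two points that matter: the amplitude factor $[C(\pm t^{\beta})]^{-(n-1)/(2n)}$ arises because every diagonal entry of $P^{-1}\mathrm{diag}(0,\dots,n-1)P$ equals $(n-1)/2$, and the hypothesis $\beta>-n$ is used only to make the post-correction perturbation, of size $O(t^{-2-\beta/n})$, integrable at infinity. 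The dichotomy check is also right: the diagonal corrections are identical in every entry, so the relevant differences are exactly $\mathrm{Re}(\mu_j-\mu_k)=\mathrm{Re}\bigl[(\nu_j-\nu_k)(C(\pm1))^{1/n}\bigr]t^{\beta/n}$, of fixed sign.

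Two caveats. First, your sketch stops short of a complete proof at the $(I+Q)$ step: one must display the transformed coefficient matrix and verify that every residual term (namely $Q'$, the products of $Q$ with the $O(1/t)$ perturbation, and the quadratic terms coming from $(I+Q)^{-1}$) is genuinely in $L^{1}$; this is routine bookkeeping, and your closing alternative of invoking a ready-made theorem is precisely what the paper does. Second, integrating the phase gives $\int\lambda\,\ud t=\tfrac{n}{\beta+n}\,C^{1/n}(\pm t)^{\frac{\beta}{n}+1}$, with constant $\tfrac{n}{\beta+n}$ rather than the $\tfrac{\beta}{n}$ printed in \eqref{solution}; the lemma as stated carries a typo (its application in Theorem \ref{Teorem_1}, where the factor $\tfrac{2k-1}{2k}=\tfrac{n}{\beta+n}$ with $n=2k-1$, $\beta=1$ appears, confirms this). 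Your derivation yields the correct constant, but you should state explicitly that it differs from the displayed formula instead of asserting that the integral ``produces the exponential phase of \eqref{solution}''.
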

The proof of this Lemma can be found in \cite{MR919406}. See also \cite{MR724721} and \cite{hinton}.
\begin{theorem}\label{Teorem_1}
Let $f(x) = \me^{-\al x^{2k}}$, with $x\in \R$,  $\al>0$ and $k\geq 1$. Then $\f$ satisfies:
\begin{equation}\label{RodNic}
\vert \f(\om)\vert \;\leq\; C_{k,\al}\me^{ -\varepsilon_{k,\al}|\om|^{\frac{2k}{2k-1}}},
\end{equation}
where $ C_{k,\al} = \frac{|2 k \al|^{\frac{k-1}{2k-1}}}{(2\pi)^{\frac{2k(k-1)}{2k-1}}}$ and $\varepsilon_{k,\al} =\delta_{k} (2\pi)^{\frac{2k}{2k-1}}\left(\frac{2k-1}{2k}\right)\left(\frac{1}{2k\al}\right)^{\frac{1}{2k-1}}$, for some positive constants $\delta_k$, as determined in the subsequent proof.
\end{theorem}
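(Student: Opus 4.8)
The plan is to convert the evaluation of $\f$ into a linear ODE with a monomial coefficient, apply Lemma~\ref{Lemma1} to get the asymptotics of a fundamental system, and then select the decaying branch forced by $\f\in\s$.

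First I would derive the differential equation satisfied by $\f$. Since $f(x)=\me^{-\al x^{2k}}$ solves the first-order ODE $f'(x)+2k\al x^{2k-1}f(x)=0$, I apply the Fourier transform together with the standard exchange rules $\widehat{f'}(\om)=2\pi i\om\,\f(\om)$ and $\widehat{x^{m}f}(\om)=(i/2\pi)^{m}\widehat{f}^{(m)}(\om)$. Writing $g=\f$ and $n=2k-1$, a short computation gives
\[
g^{(2k-1)}(\om)=\frac{(-1)^{k}(2\pi)^{2k}}{2k\al}\,\om\, g(\om),
\]
which is exactly of the form $y^{(n)}-C\,q(t)\,y=0$ of Lemma~\ref{Lemma1}, with $n=2k-1$, $q(\om)=\pm\om$ (so $\be=1>-n$), and $C=(2\pi)^{2k}/(2k\al)$, the sign $(-1)^{k}$ being absorbed into the $\pm$.

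Next I would invoke Lemma~\ref{Lemma1} to obtain a fundamental system $y_{1},\dots,y_{n}$ with
\[
y_{j}(\om)\asymp [C\om]^{-\frac{n-1}{2n}}\,\me^{\,\nu_{j}C^{1/n}\frac{n}{n+1}\om^{\frac{n+1}{n}}},
\]
where $\nu_{j}$ ranges over the $n$-th roots of $\pm1$. Here $\frac{n+1}{n}=\frac{2k}{2k-1}=\frac1s$ and $\frac{n-1}{2n}=\frac{k-1}{2k-1}$, so the algebraic prefactor already produces $C_{k,\al}=(2k\al)^{(k-1)/(2k-1)}/(2\pi)^{2k(k-1)/(2k-1)}$ and the exponent the correct power $\om^{1/s}$. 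The crucial analytic step is the connection argument: because $f\in\s$, its transform $g=\f$ is bounded (indeed rapidly decreasing) and even, so its expansion in the basis $\{y_{j}\}$ cannot carry any branch whose exponent has positive real part. I would check that for $n=2k-1$ odd none of the roots $\nu_{j}$ is purely imaginary (the relevant angles never equal $\pm\pi/2$), so every branch is genuinely exponentially growing or decaying; consequently $g$ is a combination of exclusively decaying branches.

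Finally, the decay rate of $g$ is dictated by the slowest-decaying surviving branch, i.e. by the admissible root $\nu_{j^{*}}$ whose exponent $C^{1/n}\operatorname{Re}\nu_{j^{*}}\frac{n}{n+1}$ is negative but closest to zero. Setting $\delta_{k}:=|\operatorname{Re}\nu_{j^{*}}|$ (the purely geometric quantity coming from the configuration of the $n$-th roots of unity) and combining $C^{1/n}=(2\pi)^{2k/(2k-1)}\big(\tfrac{1}{2k\al}\big)^{1/(2k-1)}$ with $\frac{n}{n+1}=\frac{2k-1}{2k}$ yields precisely $\varepsilon_{k,\al}=\delta_{k}(2\pi)^{2k/(2k-1)}\big(\tfrac{2k-1}{2k}\big)\big(\tfrac{1}{2k\al}\big)^{1/(2k-1)}$, whence \eqref{RodNic} follows. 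I expect the connection/selection step to be the main obstacle: rigorously excluding the growing and non-decaying components in the modal expansion of $\f$ and pinning down the dominant decaying mode is exactly where the constant $\delta_{k}$ is determined, while the bookkeeping of the remaining constants is routine once the geometry of the roots is resolved.
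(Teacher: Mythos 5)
Your proposal is correct and takes essentially the same route as the paper: derive the ODE $f'(x)+2k\al x^{2k-1}f(x)=0$, Fourier-transform it into $h^{(2k-1)}(\om)-\frac{(2\pi)^{2k}(-1)^{k}}{2k\al}\,\om\, h(\om)=0$, and apply Lemma~\ref{Lemma1} with $n=2k-1$, $q(\om)=\pm\om$, reading off the prefactor $C_{k,\al}$ and the exponent $\om^{2k/(2k-1)}$ from the fundamental system, then use evenness of $\f$ for $\om<0$. Your explicit branch-selection argument (no growing mode can appear since $\f\in\mathcal{S}(\R)$, no root is purely imaginary for odd $n$, and $\delta_{k}$ is the real part of the dominant admissible root) is precisely what the paper compresses into its ``unique Schwartz solution of \eqref{Tr_diff1}'' remark and the $\max_{j}\Re\{\cdot\}$ step in \eqref{trenta}, so the two proofs coincide in substance.
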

\begin{proof}
$\phantom{1}\vspace{3mm}$\newline
The idea is to use Lemma \ref{Lemma1}. 
Let us start by  differentiating the function $f(x) = \me^{-\al x^{2k}}$. This leads to $f'(x) = -2k\al x^{2k-1}f(x)$, hence $f$ satisfies the following differential equation
\begin{equation}\label{diff1}
f'(x) + 2k\al x^{2k-1}f(x) = 0.
\end{equation}
Applying the Fourier Transform to \eqref{diff1} yields to
\begin{align*}
0 &= \F(f'(x) + 2k\al x^{2k-1}f(x))(\om)\\
&= 2\pi i\om \hat{f}(\om) + 2k\al\cdot \left(\frac{i}{2\pi}\right)^{2k-1}\F\left((-2\pi i x)^{2k-1}f\right)(\om)\\
&= -\om\hat{f}(\om) + 2k\al\frac{(-1)^k}{(2\pi)^{2k}}\hat{f}^{(2k-1)},
\end{align*}
where in the latter we divided both sides of the equality by $2\pi i$.\\
Thus, the transformed equation is \begin{equation}
\label{Tr_diff1}
h^{(2k-1)}(\om) - \frac{(2\pi)^{2k}(-1)^k}{2k\al}\om h(\om)=0.
\end{equation}
Since the Fourier transform is an isomorphism on $\mathcal{S}(\R)$, there is a unique (modulo multiplicative constants) non-trivial solution $h \in \mathcal{S}(\R)$ of \eqref{Tr_diff1}, such that $h(\om) = \hat{f}(\om)$. 
Applying now  Lemma \ref{Lemma1} for $\om  >0$ with $n= 2k-1$, $q(\om)= \om$ and $C=\frac{(2\pi)^{2k}(-1)^k}{2k\al}$
the estimate of the absolute value of $h$ then becomes:
\begin{align}
|h(\om)|&{}\leq \left| \frac{(2\pi)^{2k}(-1)^k \om}{2k\al}\right|^{-\frac{k-1}{2k-1}}\me^{\max_{j}\Re\left\{\nu_j \left[\frac{(2\pi)^{2k}(-1)^k}{2k\al}\right]^{\frac{1}{2k-1}} \omega^{\frac{1}{2k-1}+1} \right\}}\label{trenta}\\
&= \frac{(2k\al)^{\frac{k-1}{2k-1}}}{(2\pi)^{\frac{2k(k-1)}{2k-1}}|\om|^{\frac{k-1}{2k-1}}}\me^{ -\delta_{k}\frac{2\pi^{\frac{2k}{2k-1}}}{({2k\al)^{\frac{1}{2k-1}}}}\left( \frac{2k-1}{2k}\right)\ \om ^{\frac{2k}{2k-1}}} \label{eq8}
\end{align} for some $\delta_{k}$. We have only to prove the inequality for $|\om|\rightarrow \infty$,  that is $ \frac{1}{|\om|}\rightarrow 0$, it follows that
\[|\hat{f}(\om)| \lesssim \frac{(2k\al)^{\frac{k-1}{2k-1}}}{(2\pi)^{\frac{2k(k-1)}{2k-1}}}\me^{ -\delta_{k}\frac{2\pi^{\frac{2k}{2k-1}}}{({2k\al)^{\frac{1}{2k-1}}}}\cdot \frac{2k-1}{2k}\om ^{\frac{2k-1}{2k}}} \qquad \forall\;\om>0,\]
where all the omitted constants are independent from $k$.\\ Since $f$ is real and even, $\f$ is real and even too. Thus, its behavior at $-\infty$ will be the one of \eqref{eq8}, as desired.
\end{proof}
\begin{remark}\label{note3}
This result prove what we claimed at the beginning of this section. Indeed, there exists $k>0$ such that $\|\hat{f}\me^{k|\om|^{\frac{1}{s}}}\|_{\infty}\leq \infty,$ for $s=1-\frac{1}{2k}$.
Using Proposition \ref{GS-prop}, we have that $f(x) = \me^{-\al(x)^{2k}}\in\mathcal{S}^s_r(\R) = \mathcal{S}^{1-\frac{1}{2k}}_{\frac{1}{2k}}(\R)$.
Consequently $\f\in\mathcal{S}^r_s(\R) = \mathcal{S}_{1-\frac{1}{2k}}^{\frac{1}{2k}}(\R).$
\end{remark}
\begin{Cor}\label{cor2}
Let $f(x) = \me^{-\be_{k} x^{2k}}$, with $x\in \R$, $\be_{k}=(-1)^{k-1}\al$, $\al>0$ and $k\geq 1$. Then $\f$ satisfies:
\begin{equation}\label{RodNic2}
\vert \f(\om)\vert \;\leq\; C_{k,\al}\me^{ -\varepsilon_{k,\al}|\om|^{\frac{2k}{2k-1}}},
\end{equation}
where $ C_{k,\al} = \frac{|2 k \al|^{\frac{k-1}{2k-1}}}{(2\pi)^{\frac{2k(k-1)}{2k-1}}}$ and $\varepsilon_{k,\al} = (2\pi)^{\frac{2k}{2k-1}}\left(\frac{2k-1}{2k}\right)\left(\frac{1}{2k\al}\right)^{\frac{1}{2k-1}}.$
\end{Cor}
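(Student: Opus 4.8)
The plan is to rerun the proof of Theorem \ref{Teorem_1} almost verbatim, with $\al$ replaced by $\be_k=(-1)^{k-1}\al$, and to track the single place where the parity of $k$ intervenes. First I would differentiate $f(x)=\me^{-\be_k x^{2k}}$ to obtain $f'(x)+2k\be_k x^{2k-1}f(x)=0$, apply the Fourier transform using $\F(f')(\om)=2\pi i\om\f(\om)$ and $\F(x^{2k-1}f)=\left(\frac{i}{2\pi}\right)^{2k-1}\f^{(2k-1)}$, and divide by $2\pi i$. This yields the transformed equation
\[
h^{(2k-1)}(\om)-\frac{(2\pi)^{2k}(-1)^{k}}{2k\be_k}\,\om\, h(\om)=0,\qquad h=\f,
\]
which is exactly \eqref{Tr_diff1} with $\al$ replaced by $\be_k$.

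The key observation is that the substitution $\be_k=(-1)^{k-1}\al$ collapses the $k$-dependent sign: the factor multiplying $\om h$ is $\frac{(2\pi)^{2k}(-1)^{k}}{2k\be_k}=-\frac{(2\pi)^{2k}}{2k\al}$, which is negative \emph{independently} of the parity of $k$. This is precisely what distinguishes the corollary from Theorem \ref{Teorem_1}, where the analogous coefficient retained the factor $(-1)^{k}$ and therefore changed sign with $k$.

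Next I would apply Lemma \ref{Lemma1} with $n=2k-1$, $q(\om)=\om$, positive constant $C=\frac{(2\pi)^{2k}}{2k\al}$, and the minus sign, and then identify $\f$ with the unique (up to a multiplicative constant) decaying member of the fundamental system. Its decay rate is governed by $\max_{j}\Re(\nu_j\zeta)$, where $\zeta$ ranges over the $(2k-1)$-th roots of $-C$. Because $n=2k-1$ is odd, $-1$ is a genuine $n$-th root of $-1$, so the real negative value $-C^{1/(2k-1)}$ occurs exactly among these roots and realizes the most negative real part; this forces the sharp exponent, i.e. $\delta_k=1$. By contrast, for even $k$ in Theorem \ref{Teorem_1} the coefficient was $+C$, the extremal roots sit at angles $\pm\frac{\pi}{2k-1}$ away from $\pi$, and the best attainable rate is only $-C^{1/(2k-1)}\cos\!\big(\frac{\pi}{2k-1}\big)$, which is the origin of the factor $\delta_k<1$ there. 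Reading off the algebraic prefactor and the exponential rate then gives $C_{k,\al}$ unchanged and $\varepsilon_{k,\al}$ with $\delta_k=1$, and the bound extends to $\om<0$ by the evenness of $f$, hence of $\f$.

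The step requiring the most care is this branch selection together with the claim $\delta_k=1$: one must verify that, among the $(2k-1)$-th roots of $-C$, the real negative root minimizes $\Re(\nu_j\zeta)$, so that the subdominant (fastest-decaying) solution has exactly the stated rate. A secondary subtlety arises for even $k$, where $\be_k<0$ makes $f(x)=\me^{\al x^{2k}}$ grow at infinity; there $\f$ must be read in the sense of the decaying solution branch (consistently with the symbol $\sigma_k$ of the generalized Heat equation), and the ODE characterization still singles out that branch and delivers \eqref{RodNic2}.
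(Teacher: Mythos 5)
Your proposal retraces the paper's own proof of Corollary \ref{cor2} essentially verbatim: rerun Theorem \ref{Teorem_1} with $\al$ replaced by $\be_k$, observe that $(-1)^k/\be_k=-1/\al$ makes the coefficient in the transformed equation \eqref{Tr_diff1} negative for every $k$, apply Lemma \ref{Lemma1} with $n=2k-1$, and conclude $\delta_k=1$ because a real negative $(2k-1)$-th root of $-C$ is available. That is exactly the paper's argument (the paper merely carries an extra constant $C_a$ coming from the lower endpoint of the integral in the exponent), so at the level of strategy your reconstruction is faithful.

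The problem is the step you yourself flag as delicate: the branch selection is asserted, not proved, both in your write-up and in the paper. Your justification leans on the claim that $\f$ is ``the unique (up to a multiplicative constant) decaying member of the fundamental system,'' and this is false as soon as $2k-1\ge 5$. The admissible exponents are $(C\om)^{1/(2k-1)}\mu$ with $\mu$ ranging over \emph{all} $(2k-1)$-th roots of $-1$; besides $\mu=-1$ there are complex-conjugate pairs with negative real part (for $k=3$: $\mu=\me^{\pm 3\pi i/5}$, $\Re\mu\approx-0.309$), so the space of decaying solutions has dimension at least $3$, and nothing in the argument excludes components of $\f$ along the slower-decaying pair. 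If such components are present, $\f$ decays at the \emph{slowest} rate occurring in its expansion, not at the rate of $\mu=-1$, i.e.\ $\delta_k<1$. This loss is not hypothetical: for odd $k\ge 3$ --- the only case in which $f=\me^{-\al x^{2k}}$ actually decays and $\f$ exists classically --- a steepest-descent evaluation of $\int_\R \me^{-\al x^{2k}-2\pi i x\om}\,\ud x$ shows that the dominant saddle points are the conjugate pair nearest the real axis, so $|\f|$ decays with constant proportional to $\sin\bigl(\tfrac{\pi}{2(2k-1)}\bigr)<1$ times the maximal rate (and oscillates), and the bound \eqref{RodNic2} with $\delta_k=1$ then fails for large $|\om|$. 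For even $k$ the situation is worse, as you note: $f=\me^{\al x^{2k}}$ is not tempered, $\f$ does not exist, and declaring $\f$ to be ``the decaying branch'' is a definition rather than a deduction (and an ambiguous one for even $k\ge 4$, again by multiplicity). In short, your proposal reproduces the paper's proof together with its gap; closing it would require showing that $\f$ has no component along the slower-decaying solutions, which is precisely what is false.
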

\begin{proof}
Following the path of the proof above, one can easily obtain the analog of \eqref{trenta}, that is
\begin{align*}
|\f(\om)|&{}\leq \left| \frac{(2\pi)^{2k}(-1)^k \om}{2k\be_{k}}\right|^{-\frac{k-1}{2k-1}}\me^{\Re\left\{\nu_j  \int_a^{\om} \left[\frac{(2\pi)^{2k}(-1)^k t}{2k\be_{k}}\right]^{\frac{1}{2k-1}}\ud t \right\}}\\
&{}= \left| -\frac{(2\pi)^{2k} \om}{2k\al}\right|^{-\frac{k-1}{2k-1}}\me^{\Re\left\{\nu_j  \int_a^{\om} \left[-\frac{(2\pi)^{2k}t}{2k\al}\right]^{\frac{1}{2k-1}}\ud t \right\}}\\
&= \frac{(2k\al)^{\frac{k-1}{2k-1}}}{(2\pi)^{\frac{2k(k-1)}{2k-1}}|\om|^{\frac{k-1}{2k-1}}}\me^{ -\delta_{k}\frac{2\pi^{\frac{2k}{2k-1}}}{({2k\al)^{\frac{1}{2k-1}}}}\left( \frac{2k-1}{2k}\right)(\om ^{\frac{2k-1}{2k}}- a^{\frac{2k-1}{2k}})}\\
&\leq C_{a}\frac{(2k\al)^{\frac{k-1}{2k-1}}}{(2\pi)^{\frac{2k(k-1)}{2k-1}}|\om|^{\frac{k-1}{2k-1}}}\me^{ -\delta_{k}\frac{2\pi^{\frac{2k}{2k-1}}}{({2k\al)^{\frac{1}{2k-1}}}}\left(\frac{2k-1}{2k}\right)\om ^{\frac{2k-1}{2k}}}, \qquad \om \in\Omega. 
\end{align*}
In this case $\delta_{k}=1$, since we are in the ``odd'' case described in the proof of \ref{Teorem_1}. Hence the result follows.
\end{proof}
\section{Heat Equation}
\subsection{Heat Equation}
We will focus on the numerical representation of solutions for the following Cauchy problem: 
\begin{equation}\label{Pc_He}
\begin{split}
&\p_t u +(- \rho\Delta) u = 0,\quad (t,x)\in\R\times\R^{d},\\
&u(0,x) = u_0(x),
\end{split}
\end{equation}
where $u_0 \in \s$ and the parameter $\rho$ is the  thermal diffusion. 
The solution $u(t,x)$ to this Cauchy Problem  can be represented by the action of the families of Fourier multipliers $\sigma_{\rho}(t,D)$ on the initial datum as follows:
\begin{equation}\label{eqt18}
u(t,x)= \int_{\R^d}\me^{2\pi i x\cdot\om}\sigma_{\rho}(t,\om)\widehat{u_0}(\om)\ud\om = \F^{-1}\left( \sigma_{\rho}(t,\om)\widehat{u_0}\right)(x),
\end{equation}
where $\sigma_{\rho}(t,x)= \me^{ -4\pi^2 \rho t |x|^2}$.
The super-exponential sparsity of the Gabor matrix for the Heat Equation was proved in \cite{2012arXiv1209.0945C}, and some numerical simulations were shown in \cite[Sec. 6]{2012arXiv1209.0945C}. Here we refine those numerical estimates presenting a new exact representation of the solution via Gabor frames.
\begin{theorem}
For every $t\in\R$ consider the operator $\sigma_{\rho}(t,D)$ defined in \eqref{eqt18}. Let $\mathcal{G}(g,\al,\be)$ be a Gabor frame with a Gaussian window function $g$.
Let $(m,n),(m',n') \in\al\Z^d\times\be\Z^d$ with $\al\be < 1$; then the modulus of the Gabor matrix of the operator $\sigma_{\rho}(t,D)$  is equal to:
\begin{multline}\label{eqt19}
\left|\la \sigma_{\rho}(t,D)\pi(m,n)g, \pi(m',n')g\ra\right| \\= (2+4\pi\rho t)^{-\frac{d}{2}}\me^{  -\pi\left [|n|^2+|n'|^2+\frac{1}{2+4\pi\rho t}\left(|m-m'|^2-|n+n'|^2\right)\right]}
\end{multline}
\end{theorem}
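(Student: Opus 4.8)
The plan is to pass to the frequency side, where the Fourier multiplier $\sigma_{\rho}(t,D)$ acts diagonally, and then to recognize the matrix entry as a single Gaussian integral. Since $\sigma_{\rho}(t,D)f=\F^{-1}(\sigma_{\rho}(t,\cdot)\f)$ and $\F$ is unitary on $\ld$, Parseval's formula gives
\[ \la \sigma_{\rho}(t,D)\pi(m,n)g,\pi(m',n')g\ra = \int_{\R^d} \sigma_{\rho}(t,\om)\,\widehat{\pi(m,n)g}(\om)\,\overline{\widehat{\pi(m',n')g}(\om)}\,\ud\om. \]
First I would compute $\widehat{\pi(m,n)g}$. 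Using the commutation relations $\widehat{T_m h}=\me^{-2\pi i m\om}\hat h$ and $\widehat{M_n h}=T_n\hat h$, together with the self-duality $\hat g=g$ of the normalized Gaussian $g=\me^{-\pi|x|^2}$, one gets $\widehat{\pi(m,n)g}(\om)=\me^{-2\pi i m(\om-n)}\me^{-\pi|\om-n|^2}$, and similarly for $(m',n')$.

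Next I would substitute these expressions and the symbol $\sigma_{\rho}(t,\om)=\me^{-4\pi^2\rho t|\om|^2}$ into the integral. Collecting all exponents, the integrand becomes $\me^{-A|\om|^2+B\cdot\om+C_0}$ with $A=\pi(2+4\pi\rho t)$, a complex linear vector coefficient $B=2\pi(n+n')+2\pi i(m'-m)$, and a constant $C_0=-\pi(|n|^2+|n'|^2)+2\pi i(mn-m'n')$. Because the quadratic form is isotropic, the integral factors coordinatewise, so it suffices to evaluate the one-dimensional Gaussian integral and raise the scalar prefactor to the power $d/2$.

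The evaluation rests on the complex Gaussian formula $\int_{\R^d}\me^{-A|\om|^2+B\cdot\om}\,\ud\om=(\pi/A)^{d/2}\me^{(B\cdot B)/(4A)}$, where the subtle point---and the step I expect to require the most care---is that $B\cdot B=\sum_j B_j^2$ is the complex sum of squares, not $|B|^2$. Expanding $B\cdot B=4\pi^2\big(|n+n'|^2-|m-m'|^2+2i(n+n')\cdot(m'-m)\big)$ and dividing by $4A$ isolates a real part $\frac{\pi}{2+4\pi\rho t}(|n+n'|^2-|m-m'|^2)$ and a purely imaginary part. Since the prefactor equals $(2+4\pi\rho t)^{-d/2}$, assembling everything exhibits the matrix entry as a unimodular phase (coming from the imaginary parts of $B\cdot B/(4A)$ and of $C_0$) times a real Gaussian. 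Finally I would take the modulus, which annihilates the phase and leaves exactly $(2+4\pi\rho t)^{-d/2}\me^{-\pi[|n|^2+|n'|^2+\frac{1}{2+4\pi\rho t}(|m-m'|^2-|n+n'|^2)]}$, as claimed. The only genuine obstacle is the careful bookkeeping of the cross terms, so that the real and imaginary parts of the exponent are separated correctly before the modulus is taken.
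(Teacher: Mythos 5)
Your proposal is correct and follows essentially the same route as the paper: pass to the frequency side via Plancherel, compute $\widehat{\pi(m,n)g}=T_nM_{-m}g$, and evaluate the resulting Gaussian integral with a complex linear term, carefully distinguishing the complex sum of squares $B\cdot B$ from $|B|^2$ (the paper packages this as the Gaussian Fourier transform evaluated at the complex point $(m-m')+i(n+n')$, which is the same computation). Your bookkeeping of $A$, $B$, $C_0$ and the resulting real part of the exponent reproduces \eqref{eqt19} exactly.
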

\begin{proof}
The calculation of the Gabor matrix for the Fourier multiplier reduces to 
\begin{equation}\label{eqt20}
 |\la \sigma_{\rho}(t,D)\pi(m,n)g, \pi(m',n')g\ra| =\left| \langle\F^{-1}\left( \me^{-4 \rho t\pi^2|\om|^2}\F({\pi(m,n)g}) \right),\pi(m',n')g\ra\right|
\end{equation}
We recall that for $a>0$
\begin{equation}\label{fouGau}
\widehat{\me^{ -a x^2}}(k)= \sqrt{\frac{\pi}{a}} \me^{\frac{-\pi^2 k^2}{a}}
\end{equation}
Using Plancherel's Theorem and Relation \eqref{fouGau} above, Equation \eqref{eqt20} can be restated as follows
\begin{align*}
{} & \left|\langle \me^{ -4 \rho t\pi^2|x|^2}\F\left({\pi(m,n)g}\right),\F\left({\pi(m',n')g}\right)\ra\right|\\
&=\left| \int_{\R^d}\me^{ -4 \rho t\pi^2|x|^2}T_nM_{-m}g(x)\overline{T_{n'}M_{-m'}g(x) } \ud x\right|\\
&=\left|\int_{\R^d}\me^{-4 \rho t\pi^2|x|^2}\me^{-2\pi i m(x-n)}\me^{-\pi|x-n|^2} \me^{ 2\pi i m'(x-n')}\me^{ -\pi|x-n'|^2}\ud x\right|\\
&= \left|\me^{ -\pi(|n'|^2+|n|^2)}\int_{\R^d}\me^{ -2\pi i x\left[(m-m')+i(n+n')\right]-(2\pi +4 \rho t \pi^2)|x|^2}\ud x\right|\\
&= \left|\me^{ -\pi(|n'|^2+|n|^2)}\F\left(\me^{ -(2\pi +4 \rho t \pi^2)|x|^2}\right)\left[(m-m')+i(n+n')\right]\right|\\
&=\me^{-\pi(|n'|^2+|n|^2)} (2+4\pi \rho t)^{-d/2}\me^{ -\frac{\pi}{2}\cdot\frac{1}{1+2\pi \rho t}\left ((m-m')+i(n+n')\right)^{2}}\\
&=\me^{-\pi(|n'|^2+|n|^2)} (2+4\pi \rho t)^{-d/2}\me^{ -\frac{\pi}{2}\cdot\frac{1}{1+2\pi \rho t}\left(|m-m'|^2-|n+n'|^2\right)}.
\end{align*}
That is the claim.
\end{proof}
\begin{remark}
We claim that our estimate is more accurate than the one in \cite{2012arXiv1209.0945C}. To figured this out, first one can check that  \[\me^{-\frac{\pi}{2}\cdot\frac{1}{1+2\pi \rho t} |n+n'|^{2}}\leq \me^{\pi\cdot\frac{1}{1+2\pi \rho t}(|n'|^2+|n|^2)}\] and thus
\begin{multline*}\me^{-\pi(|n'|^2+|n|^2)}\me^{\frac{\pi}{1+2\pi \rho t}(|n'|^2+|n|^2)}= \me^{-\frac{2\pi^{2}\rho}{1+2\pi \rho t}(|n'|^2+|n|^2)}\leq \me^{-\frac{2\pi^{2}\rho}{1+2\pi \rho t}(|n'-n|^2)}.\end{multline*}
Then, it is easy to see that
\begin{align*}
&{} \me^{-\pi(|n'|^2+|n|^2)} (2+4\pi \rho t)^{-d/2}\me^{ -\frac{\pi}{2}\cdot\frac{1}{1+2\pi \rho t}\left(|m-m'|^2-|n+n'|^2\right)}\\
&\leq (2+4\pi \rho t)^{-d/2}\me^{ -\frac{\pi}{2}\frac{1}{1+2\pi \rho t}\left(|m-m'|^2+|n|^{2}+|n'|^2\right)}\\
&\leq (2+4\pi \rho t)^{-d/2}\me^{ -\varepsilon\left(|m-m'|^2+|n-n'|^{2}\right)},
\end{align*}
Hence \eqref{eqt19} implies
\[|\la \sigma_{\rho}(t,D)\pi(m,n)g, \pi(m',n')g\ra|\mc \me^{ -\varepsilon\left(|m-m'|^2+|n-n'|^2\right)},\]
which is the upper bound for the Gabor Matrix given in \cite{2012arXiv1209.0945C}. This proves that our estimate is more accurate, as expected.\end{remark}
\subsection{Generalized Heat Equation}
The Cauchy problem for the \emph{Generalized Heat Equation} can be stated as follows:
\begin{equation}\label{Pc_Gh}
\begin{split}
&\p_t u +(- \Delta^k) u = 0, \quad (t,x) \,\in\, \R\times\R,\\
&u(0,x) = u_0(x),
\end{split}
\end{equation}
with $u_0\in\mathcal{S}(\R)$.
The solution $u(t,x)$ to this Cauchy Problem  can be represented by the action of the families of Fourier multipliers $\sigma_{k}(t,D)$ on the initial datum as follows:
\begin{equation}\label{FMult}
u(t,x)= \int_{\R^d}\me^{2\pi i x\cdot\om}\sigma_{k}(t,\om)\widehat{u_0}(\om)\ud\om = \F^{-1}\left( \sigma_{k}(t,\om)\widehat{u_0}\right)(x),
\end{equation}
where $\sigma_{k}(t,x)= \me^{t(2\pi i x)^{2k}} = \me^{ t (-1)^{k}(2 \pi x)^{2k}}$.\\
We shall give an estimate from above of the Gabor matrix of $\sigma_{k}(t,D)$.\\
\indent First we need a preliminary result, which gives a bound for the convolution of super-exponential functions.
\begin{lemma}\label{Lemma2}
Let $w_{s,\varepsilon}(z) = \me^{-\varepsilon\vert z\vert ^{\frac{1}{s}}},\; z\in\R^d$ and $s>\frac{1}{2}$. Then \begin{equation}
\big(w_{s,\varepsilon}\;\ast\; w_{s,\varepsilon}\big)(z)\mc \me^{-\varepsilon 2^{-\frac{1}{s}} \vert z\vert ^{\frac{1}{s}}}.
\end{equation}
\end{lemma}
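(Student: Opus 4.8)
The plan is to estimate the convolution integral directly,
\[
(w_{s,\varepsilon}\ast w_{s,\varepsilon})(z) = \int_{\R^d}\me^{-\varepsilon|y|^{\frac{1}{s}}}\,\me^{-\varepsilon|z-y|^{\frac{1}{s}}}\,\ud y,
\]
by extracting the predicted super-exponential factor $\me^{-\varepsilon 2^{-1/s}|z|^{1/s}}$ from the integrand and showing that the remainder integrates to a constant independent of $z$. The entire argument rests on one elementary geometric observation: for every $y\in\R^d$ the triangle inequality $|z|\le |y|+|z-y|$ forces $\max(|y|,|z-y|)\ge |z|/2$, since otherwise both summands would be strictly smaller than $|z|/2$ and violate it. As $t\mapsto t^{1/s}$ is increasing, the larger of the two terms $|y|^{1/s}$ and $|z-y|^{1/s}$ is at least $2^{-1/s}|z|^{1/s}$. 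This already gives the pointwise bound $|y|^{1/s}+|z-y|^{1/s}\ge 2^{-1/s}|z|^{1/s}$ with exactly the constant appearing in the statement.

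The pointwise bound alone is not enough, because used crudely it only bounds the integrand by a $z$-dependent constant and the integral diverges. To recover integrability I would split the domain as $A = \{\,y : |y|\ge|z-y|\,\}$ and $B = \R^d\setminus A$. On $A$ the larger term is $|y|^{1/s}$, so $|y|^{1/s}\ge 2^{-1/s}|z|^{1/s}$, while the complementary factor $\me^{-\varepsilon|z-y|^{1/s}}$ is left untouched; hence
\[
\int_A \me^{-\varepsilon\left(|y|^{1/s}+|z-y|^{1/s}\right)}\,\ud y \;\le\; \me^{-\varepsilon 2^{-1/s}|z|^{1/s}}\int_{\R^d}\me^{-\varepsilon|z-y|^{1/s}}\,\ud y \;=\; C\,\me^{-\varepsilon 2^{-1/s}|z|^{1/s}},
\]
where the substitution $u=z-y$ gives the $z$-independent constant $C=\int_{\R^d}\me^{-\varepsilon|u|^{1/s}}\,\ud u$. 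On $B$ one argues symmetrically, now with $|z-y|^{1/s}\ge 2^{-1/s}|z|^{1/s}$ and $\me^{-\varepsilon|y|^{1/s}}$ set aside, producing the same bound with the same constant $C$. Adding the two contributions yields the claim with implied constant $2C$. Finiteness of $C$ follows by passing to polar coordinates: $\int_0^\infty \me^{-\varepsilon r^{1/s}}r^{d-1}\,\ud r<\infty$ for every $s\in(0,\infty)$ and $\varepsilon>0$, the exponential beating any polynomial factor, so in particular the range $s>\tfrac12$ of the statement is amply covered.

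The only genuine subtlety — and the step I would flag as the crux — is that the \emph{sharp} constant $2^{-1/s}$ cannot survive a naive convexity estimate of the type $(a+b)^{1/s}\mc(a^{1/s}+b^{1/s})$ combined with integration, since any such route forces one to surrender part of the exponent in order to gain decay in the integration variable. The domain splitting is precisely what lets me retain the \emph{full} factor $\me^{-\varepsilon 2^{-1/s}|z|^{1/s}}$ by always parking it on the ``large'' variable, while the other, genuinely integrable, Gaussian-type tail is integrated freely over all of $\R^d$.
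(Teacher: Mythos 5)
Your proof is correct and takes essentially the same approach as the paper: split the integration domain into two regions according to which of $|y|$, $|z-y|$ is forced to be at least $|z|/2$, extract the factor $\me^{-\varepsilon 2^{-\frac{1}{s}}|z|^{\frac{1}{s}}}$ from that variable, and integrate the remaining factor over all of $\R^d$ to a finite, $z$-independent constant. The only cosmetic difference is the choice of splitting sets — you use the median set $\{y : |y|\ge|z-y|\}$ and its complement, while the paper uses the ball $N_z=\{x : |z-x|\le |z|/2\}$ and its complement — but the mechanism and the resulting constant are identical.
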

\begin{proof}
Expanding the convolution product between the weight functions one has:
\[\big(w_{s,\varepsilon}\;\ast\; w_{s,\varepsilon}\big)(z) = \int_{\R^d}  \me^{ -\varepsilon\vert x\vert ^{\frac{1}{s}}} \me^{ -\varepsilon \vert z-x\vert ^{\frac{1}{s}}} \ud x. \]
Consider the set $N_{z}:= \big\{x\in\R^d : \vert z-x\vert \leq \frac{\vert z\vert }{2} \big\}$. 
If $x \in N_{z}$ then $\vert x\vert \geq \frac{\vert z\vert }{2}$; therefore 
\[ \me^{-\varepsilon \vert x\vert ^{\frac{1}{s}}} \leq \me^{ -\varepsilon 2^{-\frac{1}{s}} \vert z\vert ^{\frac{1}{s}}}. \]
Now, using the previous results:
\begin{align*}
\big(w_{s,\varepsilon}\ast w_{s,\varepsilon}\big)(z) &=\int_{\R^d}  \me^{-\varepsilon \vert x\vert ^{\frac{1}{s}}} \me^{-\varepsilon \vert z-x\vert ^{\frac{1}{s}}}\ud x\\
&= \int_{N_{z}}  \me^{-\varepsilon \vert x\vert ^{\frac{1}{s}}}\me^{-\varepsilon \vert z-x\vert ^{\frac{1}{s}}} \ud x +\int_{N^{c}_{z}}  \me^{-\varepsilon \vert x\vert ^{\frac{1}{s}}}  \me^{-\varepsilon \vert z-x\vert ^{\frac{1}{s}}}\ud x\\
&\leq  \me^{-\varepsilon 2^{-\frac{1}{s}} \vert z\vert ^{\frac{1}{s}}} \cdot \left( \int_{N_{z}} \me^{-\varepsilon \vert z-x\vert ^{\frac{1}{s}}}\ud x +\int_{N^{c}_{z}}  \me^{-\varepsilon \vert x\vert ^{\frac{1}{s}}}\ud x \right) \\
&\mc \me^{-\varepsilon 2^{-\frac{1}{s}}\vert z\vert ^{\frac{1}{s}}} = w_{s,\varepsilon 2^{-\frac{1}{s}}}.
\end{align*}
as desired.
\end{proof}
\begin{theorem}\label{thm1}
Let $\mathcal{G}(g,\al,\be)$ be a Gabor Frame with a Gaussian window function $g$.
Let $\lambda = (m,n), \nu = (m',n') \in\al\Z\times\be\Z$ with $\al\be < 1$. Then
\[\vert \la \sigma_{k}(t,D) \pi(\lambda)g,\pi(\nu)g\ra\vert  \mc C_{t,k}\me^{ -\tilde{\varepsilon}_{t,k} 2^{-\frac{1}{s}}( \lambda-\nu) ^{\frac{1}{s}}}, \]
with $s = \frac{2k}{2k-1}$, $\tilde{\varepsilon}_{k,t} = \left(\frac{2k-1}{2k}\right)\left(\frac{1}{2k t}\right)^{\frac{1}{2k-1}}2^{-\frac{k}{2k-1}} $ and $C_{t,k} =  |2 k t|^{\frac{k-1}{2k-1}}$ .
\end{theorem}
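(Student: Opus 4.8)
The plan is to imitate the exact computation just carried out for the ordinary Heat equation, up to the point where the symbol ceases to be Gaussian, and then to replace the explicit Gaussian integral by the kernel bound of Corollary \ref{cor2} fed into the convolution estimate of Lemma \ref{Lemma2}. First I would write $\sigma_{k}(t,D)=\F^{-1}\sigma_{k}\F$ and move the whole pairing to the Fourier side by Plancherel, using $\F(\pi(m,n)g)=T_{n}M_{-m}g$ (and $\f=g$ for the Gaussian). Writing $\lambda=(m,n),\ \nu=(m',n')$, the product of the two translated Gaussians $g(\xi-n)g(\xi-n')$ splits off the clean prefactor $\me^{-\frac{\pi}{2}|n-n'|^{2}}$ and leaves the centred Gaussian $\me^{-2\pi(\xi-\frac{n+n'}{2})^{2}}$, so that in modulus the entry reduces to $\me^{-\frac{\pi}{2}|n-n'|^{2}}\,|I|$ with
\[
I \;=\; \int_{\R}\sigma_{k}(\xi)\,\me^{2\pi i(m'-m)\xi}\,\me^{-2\pi\left(\xi-\frac{n+n'}{2}\right)^{2}}\,\ud\xi .
\]
This is the Fourier transform of the product $\sigma_{k}\cdot\phi$ (with $\phi$ the shifted Gaussian) evaluated at $-(m'-m)$; hence $|I|=|(\widehat{\sigma_{k}}\ast\widehat{\phi})(-(m'-m))|$, and since $|\widehat{\phi}|$ is a Gaussian whose modulus does not see the shift, $|I|$ depends only on $m-m'$.

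For the symbol factor I would invoke Corollary \ref{cor2}: here $\sigma_{k}(\xi)=\me^{-\beta_{k}\xi^{2k}}$ with $\beta_{k}=(-1)^{k-1}\alpha$ and $\alpha=t(2\pi)^{2k}$, which is exactly the case treated there, giving $|\widehat{\sigma_{k}}(\eta)|\mc C_{k,\alpha}\me^{-\varepsilon_{k,\alpha}|\eta|^{s}}$ with $s=\frac{2k}{2k-1}$. Substituting $\alpha=t(2\pi)^{2k}$ I expect $C_{k,\alpha}$ to collapse precisely to $C_{t,k}=|2kt|^{\frac{k-1}{2k-1}}$ and $\varepsilon_{k,\alpha}$ to $\left(\frac{2k-1}{2k}\right)\left(\frac{1}{2kt}\right)^{\frac{1}{2k-1}}$, which is why exactly these constants appear in the statement. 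Both convolution factors are now super-exponential weights, but with different natural exponents ($s>1$ for $\widehat{\sigma_{k}}$ and $2$ for $\widehat{\phi}$); since $\frac1s<s$ and $\frac1s<2$, I would downgrade each to the common exponent $\frac1s$ at the cost of only a multiplicative constant, bringing both into the form $\me^{-c\,|\eta|^{1/s}}$. Lemma \ref{Lemma2} then applies to their convolution and delivers $|I|\mc C_{t,k}\,\me^{-\tilde{\varepsilon}_{t,k}2^{-1/s}|m-m'|^{1/s}}$, the factor $2^{-1/s}$ being precisely the one produced by that lemma.

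Finally I would absorb the position direction: the prefactor $\me^{-\frac{\pi}{2}|n-n'|^{2}}$ is dominated by the same weight $\me^{-\tilde{\varepsilon}_{t,k}2^{-1/s}|n-n'|^{1/s}}$, and then, since $\frac1s=\frac{2k-1}{2k}\le 1$, the subadditivity $|m-m'|^{1/s}+|n-n'|^{1/s}\ge|\lambda-\nu|^{1/s}$ fuses the two exponentials into the single weight $\me^{-\tilde{\varepsilon}_{t,k}2^{-1/s}|\lambda-\nu|^{1/s}}$, which is the claim. I would stress that the downgrade to the exponent $\frac1s$ is the whole point rather than an oversight: the sharp exponent $s>1$ is incompatible with subadditivity, and $\frac1s$ is the largest admissible exponent $\le 1$, so this is the (necessary) price for a single clean estimate in $|\lambda-\nu|$ instead of a product bound in $|m-m'|$ and $|n-n'|$ separately. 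I expect the only genuine obstacle to be bookkeeping: one must track every multiplicative constant through the two downgrades and through Lemma \ref{Lemma2} so that the rate emerges as exactly $\tilde{\varepsilon}_{t,k}2^{-1/s}$; note that $\tilde{\varepsilon}_{t,k}=\left(\frac{2k-1}{2k}\right)\left(\frac{1}{2kt}\right)^{\frac{1}{2k-1}}2^{-\frac{k}{2k-1}}$ carries an extra power of two, $2^{-k/(2k-1)}$, beyond the $2^{-1/s}$ supplied by the lemma, and accounting for this extra factor through the exponent downgrade is the most delicate point of the argument.
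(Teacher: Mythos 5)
Your reduction to the Fourier side, the completion of the square that extracts $\me^{-\frac{\pi}{2}|n-n'|^{2}}$, and the combination of Corollary \ref{cor2} with Lemma \ref{Lemma2} all follow the same skeleton as the paper's proof (your completed square is in fact cleaner than the paper's device of evaluating a convolution at the complex point $(m-m')+i(n+n')$, and your substitution of $\al=t(2\pi)^{2k}$ into $C_{k,\al}$, $\varepsilon_{k,\al}$ is exactly the paper's). The genuine flaw is your central claim that one \emph{must} downgrade both convolution factors to the exponent $\frac{2k-1}{2k}\le 1$ because ``the sharp exponent $s>1$ is incompatible with subadditivity.'' That claim is false, and it is exactly where your argument delivers a strictly weaker conclusion than the paper's. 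For all $a,b\ge 0$ and every $p>0$ one has $(a+b)^{p}\le 2^{p}(a^{p}+b^{p})$; the paper applies this with $a=|m-m'|^{2}$, $b=|n-n'|^{2}$, $p=\frac{k}{2k-1}$, i.e.
\[
|m-m'|^{\frac{2k}{2k-1}}+|n-n'|^{\frac{2k}{2k-1}}\;\ge\;2^{-\frac{k}{2k-1}}\left(|m-m'|^{2}+|n-n'|^{2}\right)^{\frac{k}{2k-1}}\;=\;2^{-\frac{k}{2k-1}}\,|\lambda-\nu|^{\frac{2k}{2k-1}},
\]
so the two lattice directions can be merged while keeping the full super-exponential exponent $\frac{2k}{2k-1}>1$. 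The price is precisely the factor $2^{-\frac{k}{2k-1}}$ sitting inside $\tilde{\varepsilon}_{t,k}$ which you flagged as mysterious and proposed to absorb ``through the exponent downgrade.'' Concretely, the paper downgrades only the Gaussian factors $\me^{-\frac{\pi}{2}x^{2}}$ and $\me^{-\frac{\pi}{2}|n-n'|^{2}}$ (whose exponent $2$ exceeds $\frac{2k}{2k-1}$) to weights $\me^{-\varepsilon_{k,\al}|x|^{\frac{2k}{2k-1}}}$, applies Lemma \ref{Lemma2} to the self-convolution of those weights (its proof needs only integrability, and yields the factor $2^{-\frac{2k}{2k-1}}$), and then merges directions as above, ending with $\vert\la\sigma_{k}(t,D)\pi(\lambda)g,\pi(\nu)g\ra\vert\mc C_{t,k}\,\me^{-\varepsilon_{k,\al}2^{-\frac{3k}{2k-1}}|\lambda-\nu|^{\frac{2k}{2k-1}}}$, with no downgrading of the symbol factor at any point.

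The consequence is that your route proves decay of order $\me^{-c|\lambda-\nu|^{\frac{2k-1}{2k}}}$, sub-exponential, whereas the paper's proof gives $\me^{-c|\lambda-\nu|^{\frac{2k}{2k-1}}}$, super-exponential; for large $|\lambda-\nu|$ these are very different statements, and only the latter is consistent with Remark \ref{note3} and Theorem \ref{G-S decay}, which for a symbol of Gelfand--Shilov regularity $\frac{2k-1}{2k}$ force the Gabor matrix to decay with exponent $\frac{2k}{2k-1}$. You were arguably misled by the theorem's own notation (it writes $(\lambda-\nu)^{1/s}$ with $s=\frac{2k}{2k-1}$, which taken literally is your weak exponent), but the body of the paper's proof, the factor $2^{-\frac{2k}{2k-1}}$ it extracts from Lemma \ref{Lemma2}, and the remark following the theorem make clear that the intended and proved exponent is $\frac{2k}{2k-1}$. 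A secondary defect of your approach: once you allow downgrades of the exponent, any prescribed rate can be achieved at the cost of a $(t,k)$-dependent multiplicative constant hidden in $\mc$, so the explicit constants $C_{t,k}$ and $\tilde{\varepsilon}_{t,k}$ --- whose exact tracking for numerical purposes is the stated point of this theorem --- become vacuous targets rather than quantities forced by the argument, as they are in the paper (from Corollary \ref{cor2}, Lemma \ref{Lemma2}, and the merging inequality, respectively). Finally, a small imprecision worth fixing: $|I|$ itself is not a function of $m-m'$ only; it is the bound $|I|\le(|\widehat{\sigma_{k}}|\ast|\widehat{\phi}|)(m-m')$ that depends only on $m-m'$.
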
 
\begin{proof}
Considering equality \eqref{FMult} and using the Plancherel's Theorem, it follows that:
\begin{align}
\begin{split}
\vert \langle &\sigma_{k}(t,D)\pi(\lambda)g ,\pi(\nu)g\rangle\vert\\  &= \vert \langle \sigma_{k}(t,\om)\F({\pi(\lambda)g),}\F({\pi(\nu)g)}\rangle\vert\\
&= \left|\int_{\R^d} \me^{ t (-1)^{k}(2 \pi \om)^{2k}}\F\Big(\pi(\lambda)\me^{ -\pi x ^2}\Big)(\om)\overline{\F\Big(\pi(\nu)\me^{ -\pi x ^2}\Big)(\om)}\ud \om\right|\label{eqt25}
\end{split}
\end{align}
Recalling that $\lambda = (m,n)$ and $\nu = (m',n')$, then the Fourier transform of the time-frequency shift reads: 
\[\F\big(\pi(\lambda)g) = T_{n}M_{-m}g = \me^{ -2\pi i m\cdot(\om-n)}g(\om-n).\]
Similar relations works for $\nu$. 
Hence \eqref{eqt25} becomes, 
\begin{align}
\phantom{r}
&= \left\vert \int_{\R^d} \me^{ t (-1)^{k}(2 \pi \om)^{2k}}\me^{ -2\pi i m(\om-n) -\pi (  \om-n ) ^2} \me^{ 2\pi i m'(\om-n') -\pi( \om-n')^2}\ud \om\right\vert \nonumber \\
&= \left\vert \me^{ -\pi(n^2+n'^2)}\int_{\R^d} \me^{ -\al\om^{2k}} \me^{ -2\pi\vert \om\vert ^2} \me^{ -2\pi i \om\big((m-m') + i(n+n')\big)}\ud \om\right\vert\nonumber\\
&= \left\vert \me^{ -\pi(n^2+n'^2)}\F\left(\me^{ -\al\om^{2k}}\me^{ -2\pi\om ^2}\right)\left(\theta\right)\right|,\nonumber
\end{align}
with $\al = (-1)^{k-1}t \,(2\pi)^{2k}$ and $\theta = \left((m-m') + i(n+n')\right).$\\
Now we want to use Corollary \ref{cor2} 
Recall that  \eqref{RodNic2} reads: 
\begin{equation*}
\left\vert \F\left(\me^{ -\al\om^{2k}}\right)\right\vert \;\leq\; C_{k,\al}\me^{ -\varepsilon_{k,\tilde{\al}}|\om|^{\frac{2k}{2k-1}}},
\end{equation*}
where $ C_{k,\al} = \frac{|2 k \al|^{\frac{k-1}{2k-1}}}{2\pi^{k-1}}$, $\varepsilon_{k,\al} = \pi\left(\frac{2k-1}{2k}\right)\left(\frac{1}{2k\al}\right)^{\frac{1}{2k-1}}$ and $|\,\om\,|$ is the modulus of $\om \in \R$. \\
Using $\F(f\cdot g) = (\hat{f}\ast\hat{g})$,  and Lemma \ref{Lemma2}, the last integral can be restated as follows:
\begin{align*}
\label{r}
&= \left\vert \me^{ -\pi(n^2+n'^2)}\left(\F\left(\me^{ t (-1)^{k}(2 \pi x)^{2k}}\right)\ast \F\left(\me^{ -2\pi\om ^2} \right)   \right)\big(\theta\big)\right\vert \\
&\mc  \left\vert C_{k,\al}\me^{ -\pi(n^2+n'^2)}\left(\left(\me^{ -\varepsilon_{k,\al} \vert x\vert ^{\frac{2k}{2k-1}}}\right)\ast\left(\me^{ -\frac{\pi}{2} x ^2} \right)   \right)\big(\theta\big)\right\vert \\
&\mc  \left\vert C_{k,\al}  \me^{ -\pi(n^2+n'^2)}\left(\left(\me^{ -\varepsilon_{k,\al} \vert x\vert ^{\frac{2k}{2k-1}}}\right)\ast\left(\me^{ -\varepsilon_{k,\al} \vert x\vert ^{\frac{2k}{2k-1}}} \right)   \right)\big(\theta\big)\right\vert \\
&= \left\vert C_{k,\al} \me^{ -\pi(n^2+n'^2)}\Big(w_{\frac{2k}{2k-1},\varepsilon_{k,\al}}\ast w_{\frac{2k}{2k-1},\varepsilon_{k,\al}} \Big)\big(\theta\big)\right\vert \\
&\mc  \left\vert C_{k,\al} \me^{ -\pi(n^2+n'^2)}\Big(w_{\frac{2k}{2k-1},\varepsilon_{k,\al} 2^{-\frac{2k}{2k-1}}}\Big)\big((m-m') + i(n+n')\big)\right\vert \\
&\mc  \left\vert C_{k,\al} \me^{ -\pi(n^2+n'^2)}\me^{ -\varepsilon_{k,\al} 2^{-\frac{2k}{2k-1}}\big\vert(m-m') + i(n+n')\big\vert^{\frac{2k}{2k-1}} }\right\vert\\
&= \left\vert C_{k,\al} \me^{ -\pi(n^2+n'^2)}\me^{ -\varepsilon_{k,\al} 2^{-\frac{2k}{2k-1}}\big ((m-m')^2 + (n+n')^2\big)^{\frac{k}{2k-1}} }\right\vert\\
&\mc  \left\vert C_{k,\al} \me^{ -\pi(n^2+n'^2)}\me^{ -\varepsilon_{k,\al} 2^{-\frac{2k}{2k-1}}\big ((m-m')^2)^{\frac{k}{2k-1}} }\right\vert.
\end{align*}
It is well known that $\forall \; a,b\; > 0, p>0$ the following inequality holds:
\[(a+b)^p\leq 2^{p}(a^p+b^p).\]
Thus, putting $a= |m-m'|^2, \; b=|n-n'|^2$ and $p = \frac{k}{2k-1}$ it follows that
\begin{equation}
\Big(\big ((m-m')^2\big)^{\frac{k}{2k-1}}  +\big((n-n')^2\big)^{\frac{k}{2k-1}}\Big)\geq 2^{-\frac{k}{2k-1}}\big((m-m')^2+(n-n')^2\big)^{\frac{k}{2k-1}}.
\end{equation}
This inequality is useful in the calculation before to reach the desired estimate: 
\begin{align*}
&\vert \langle \sigma_{k}(t,D)\pi(m,n)g,\pi(m',n')g\rangle \vert \\ 
&\mc  \left\vert C_{k,\al} \me^{ -\frac{\pi}{2}((n-n')^2)}\me^{ -\varepsilon_{k,\al} 2^{-\frac{2k}{2k-1}}\big ((m-m')^2\big)^{\frac{k}{2k-1}} }\right\vert\\
&\mc  \left\vert C_{k,\al} \me^{ -\varepsilon_{k,\al} 2^{-\frac{2k}{2k-1}}\Big(\big ((m-m')^2\big)^{\frac{k}{2k-1}}  +\big((n-n')^2\big)^{\frac{k}{2k-1}}\Big)}\right\vert\\
&\mc  \left\vert C_{k,\al} \me^{ -\varepsilon_{k,\al} 2^{-\frac{3k}{2k-1}}\big ((m-m')^2+(n-n')^2\big)^{\frac{k}{2k-1}}}\right\vert\\
&\mc  \left\vert C_{k,\al} \me^{ -\varepsilon_{k,\al} 2^{-\frac{3k}{2k-1}}\big ( \lambda-\nu)^{\frac{2k}{2k-1}}}\right\vert,\\
\end{align*} 
where $\lambda = (m,n) \mbox{ and } \nu = (m',n')$.\\
\indent If we exploit the constant $\al = \al(k,t) = t\,(2\pi)^{2k}$ into $C_{k,\al}$ and  $\varepsilon_{k,\al}$, we obtain:
\[ C_{k,\al} = \frac{|2 k \al|^{\frac{k-1}{2k-1}}}{(2\pi)^{\frac{2k(k-1)}{2k-1}}}  = \frac{|2 k t (2\pi)^{2k}|^{\frac{k-1}{2k-1}}}{(2\pi)^{\frac{2k(k-1)}{2k-1}}} =|2 k t |^{\frac{k-1}{2k-1}}.
\]
Analogously 
\begin{align*}
\varepsilon_{k,\al}   2^{-\frac{k}{2k-1}} &= (2\pi)^{\frac{2k}{2k-1}}\left(\frac{2k-1}{2k}\right)\left(\frac{1}{2k (2\pi)^{2k} t}\right)^{\frac{1}{2k-1}}  2^{-\frac{k}{2k-1}}\\
&=\left(\frac{2k-1}{2k}\right)\left(\frac{1}{2 k t}\right)^{\frac{1}{2k-1}}2^{-\frac{k}{2k-1}} .\end{align*}
Finally, renaming the two constants
\[C_{k,t} = |2 k t|^{\frac{k-1}{2k-1}}\qquad \mbox{ and }\qquad \tilde{\varepsilon}_{k,t } = \left(\frac{2k-1}{2k}\right)\left(\frac{1}{2 k t}\right)^{\frac{1}{2k-1}}2^{-\frac{k}{2k-1}},   \]
the result follows.
\end{proof}
\begin{remark}
Thanks to Remark \ref{note3}, it is clear that $\sigma_{k}(x,t) = \me^{-t(2\pi x)^{2k}}$ fulfills the hypothesis of Theorem \ref{G-S decay} with $s=\frac{2k-1}{2k}$. From Theorem \ref{thm1}, we get
\[\vert \la \sigma_{k}(t,D) \pi(\lambda)g,\pi(m',n')g\ra\vert  \mc C_{t,k}\me^{ -\tilde{\varepsilon}_{t,k} 2^{-\frac{1}{s}}\Big((m,n)-(m',n')\Big) ^{\frac{1}{s}}}, \]
with $s = \frac{2k}{2k-1}$ and that is consistent with \eqref{GS_ineq}.\end{remark}
\section{Schr\"odinger equation with hamiltonian $\mathcal{H}_\mathcal{A} = -\frac{1}{4\pi} \Delta u + \pi |x|^2 $}\label{Sec4}
The Cauchy problem for the \emph{Harmonic Repulsor} can be stated as follows:
\begin{equation}\label{Pc_Hr}
\begin{split}
&i\p_t u -\frac{1}{4\pi} \Delta u + \pi |x|^2= 0, \quad (t,x)\,\in\, \R\times\R^{d},\\
&u(0,x) = u_0(x),
\end{split}
\end{equation}
$u_0\in\s$.
The solution can be calculated using the metaplectic operator in Section \ref{section2.4}. 
Here the Hamiltonian $H_{\mathcal{A}}= -\frac{1}{4\pi} \Delta u + \pi |x|^2$ can be written as  $H_{\mathcal{A}} = 2\pi\mathcal{P^{\om}_A}$, with $\mathcal{P^{\om}_A}$
with $B_{j,k} = C_{j,k}  = \delta^i_j$ and $A_{j,k} = D_{j,k}= 0$.
Therefore, the symplectic matrix related to \eqref{Pc_Hr} is
$\mathcal{A}= \left( \begin{array}{rr}
0 & I_d \\ 
I_d & 0
\end{array}\right)$.
We know that the solution of the Cauchy problem is given by  $\mu(\me^{ t \mathcal{A}})u_0(x)$, where $\mu$ is the mataplectic representation. So we have to calculate the exponential of the matrix $\mathcal{A}$. The diagonal decomposition of $\mathcal{A}$ is
\[
\mathcal{A}=\frac{1}{2}\left(\begin{array}{rr}
I_d & -I_d\\
I_d & I_d
\end{array}\right)\left(\begin{array}{rr}
I_d & 0\\
0 & -I_d
\end{array}\right)\left(\begin{array}{rr}
I_d & I_d\\
-I_d & I_d
\end{array}\right)\]
Then we have 
\[
\me^{t\mathcal{A}}=\frac{1}{2}\left(\begin{array}{rr}
I_d & -I_d\\
I_d & I_d
\end{array}\right)\left(\begin{array}{cc}
\me^{tI_d} & 0\\
0 & \me^{-tI_d}
\end{array}\right)\left(\begin{array}{rr}
I_d & I_d\\
-I_d & I_d
\end{array}\right). \]
It is easy to see that $\me^{\pm tI}= \me^{\pm t}\left(\begin{array}{rr}
I_d & 0\\
0 & I_d
\end{array}\right)$. Thus,
\[\me^{t\mathcal{A}}= \left(\begin{array}{rr}
\frac{\me^t+\me^{-t}}{2}I_d &\frac{\me^t-\me^{-t}}{2}I_d\\
\frac{\me^t-\me^{-t}}{2}I_d & \frac{\me^t+\me^{-t}}{2}I_d
\end{array}\right).\]
Using the definition of the hyperbolic sine and cosine, we obtain
 \[\me^{t\mathcal{A}}= \left( \begin{array}{rr}
\cosh(t)I_d & \sinh(t)I_d \\ 
\sinh(t)I_d & \cosh(t)I_d
\end{array}\right).\]
Now, we can use \eqref{Metaplectic_Repr_eqt} to calculate the solution of
\eqref{Pc_Hr} with initial datum $u_0 = \pi(m,n) g$, that is 
\begin{align}
\begin{split}
\label{t9}
& u(t,x) = \mu(\me^{t\mathcal{A}})\pi(m,n)g \\&= \frac{1}{\cosh(t)}\int_{\R^d} \me^{ \pi i \tanh(t)x^2 + \frac{2\pi i x \om}{\cosh(t)}-\pi i \tanh(t)\om^2} \F\left(\pi(m,n)g\right)(\om)\ud \om.
\end{split}
\end{align}

Fix $t\in\R$, in order to calculate the Gabor matrix of the operator $T$, we compute first \eqref{t9}.
\begin{lemma}\label{Lemma41}
Consider the metaplectic operator \eqref{t9} and the time-frequency shifts of the Guassian  $\pi(m,n)g(x) = M_n T_m \me^{-\pi |x|^2}$, where $(m,n)\in \Lambda$ with $\Lambda =  \al\Z^d\times\be\Z^d$ and $\al\be<1$. Then one has
\begin{align*}
T \pi(m,n)g(x) &=  C_t \me^{-\frac{\pi (m+in)^2 \cosh(t)}{\cosh(t)+i\sinh(t)}}  \me^{-\pi x^2 \left[ \frac{\cosh(t)-i\sinh(t)}{\cosh(t)+i\sinh(t)}\right]+\frac{2\pi x(m+ i n)}{\cosh(t)+i\sinh(t)}}.
\end{align*}
where $C_t = \left(\frac{1}{\cosh(t)+i\sinh(t)}\right)^{\frac{d}{2}} \me^{ -\pi n^2 + 2\pi i m\cdot\, n}.$
\end{lemma}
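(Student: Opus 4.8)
The plan is to reduce the integral in \eqref{t9} to a single complex Gaussian integral and then read off, one by one, the $x^2$-, $x$-, and constant-parts of the exponent in the claimed formula. First I would make the integrand explicit by inserting the Fourier transform of the window. Since $g=\me^{-\pi|x|^2}$, the identity $\F(\pi(m,n)g)(\om)=T_nM_{-m}g(\om)=\me^{-2\pi i m(\om-n)}\me^{-\pi|\om-n|^2}$ already used earlier in the paper gives the integrand of \eqref{t9} in closed form. The factor $\me^{\pi i\tanh(t)x^2}$ is independent of $\om$ and can be pulled out of the integral; collecting the remaining powers of $\om$ leaves an integral of the shape $\int_{\R^d}\me^{a|\om|^2+b\cdot\om}\ud\om$, where the quadratic coefficient is $a=-\pi\frac{\cosh(t)+i\sinh(t)}{\cosh(t)}$, the linear coefficient is $b=\frac{2\pi i x}{\cosh(t)}-2\pi i m+2\pi n$, and the purely $\om$-independent remainder is $\me^{2\pi i m\cdot n-\pi|n|^2}$. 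Because every block of $\me^{t\mathcal{A}}$ is a scalar multiple of $I_d$, the integral factors over coordinates, so it suffices to carry out a one-dimensional computation and raise the scalar quantities to the power $d/2$ at the end.

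Next I would evaluate the Gaussian integral by $\int_{\R^d}\me^{a|\om|^2+b\cdot\om}\ud\om=(\pi/(-a))^{d/2}\,\me^{-b^2/(4a)}$, valid by analytic continuation since $\Re(a)=-\pi<0$. The normalization $(\pi/(-a))^{d/2}=\big(\cosh(t)/(\cosh(t)+i\sinh(t))\big)^{d/2}$ combines with the metaplectic prefactor $(\det A)^{-1/2}=\cosh^{-d/2}(t)$ from \eqref{Metaplectic_Repr_eqt} to produce exactly $(\cosh(t)+i\sinh(t))^{-d/2}$, the prefactor of $C_t$. To organize the remaining exponent it is convenient to set $w=m+in$ and to observe that $n-im=-iw$, so that $b=2\pi i\big(\tfrac{x}{\cosh(t)}-w\big)$ and hence $b^2=-4\pi^2\big(\tfrac{x}{\cosh(t)}-w\big)^2$. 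Expanding $-b^2/(4a)$ then splits into three pieces: a pure $x^2$ term, the linear term $\frac{2\pi x(m+in)}{\cosh(t)+i\sinh(t)}$ which already matches the claim, and the constant $-\frac{\pi\cosh(t)(m+in)^2}{\cosh(t)+i\sinh(t)}$, which becomes the explicit exponential factor in the statement.

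The step demanding genuine care is the simplification of the $x^2$-coefficient. After adding the pulled-out term $\pi i\tanh(t)x^2$ to the $x^2$-piece $-\frac{\pi x^2}{\cosh(t)(\cosh(t)+i\sinh(t))}$ coming from $-b^2/(4a)$, the combined coefficient must collapse to $-\pi\frac{\cosh(t)-i\sinh(t)}{\cosh(t)+i\sinh(t)}$. This is precisely where the hyperbolic identity $\cosh^2(t)-\sinh^2(t)=1$ is used: the numerator $i\sinh(t)\cosh(t)-\sinh^2(t)-1$ must be recognized as $-\cosh(t)\big(\cosh(t)-i\sinh(t)\big)$, after which the factor $\cosh(t)$ cancels and the desired form appears. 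I expect this cancellation to be the main (albeit purely algebraic) obstacle, since a sign slip there would spoil the match with the stated $x^2$-exponent.

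Finally, I would reassemble the pieces: the prefactor $(\cosh(t)+i\sinh(t))^{-d/2}$ together with the $\om$-independent factor $\me^{2\pi i m\cdot n-\pi|n|^2}$ constitutes exactly $C_t$; the constant $-\frac{\pi\cosh(t)(m+in)^2}{\cosh(t)+i\sinh(t)}$ is the first explicit exponential; and the $x^2$- and $x$-terms computed above furnish $\me^{-\pi x^2\left[\frac{\cosh(t)-i\sinh(t)}{\cosh(t)+i\sinh(t)}\right]+\frac{2\pi x(m+in)}{\cosh(t)+i\sinh(t)}}$. Collecting these yields the stated expression for $T\pi(m,n)g(x)$, which completes the proof.
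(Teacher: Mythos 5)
Your proposal is correct and follows essentially the same route as the paper: insert the Fourier transform of the shifted Gaussian, pull the $\me^{\pi i\tanh(t)x^2}$ phase out of the integral, evaluate the resulting complex Gaussian integral (the paper invokes \eqref{fouGau} at the complex argument $m+in-\tfrac{x}{\cosh(t)}$, you complete the square via $\int\me^{a|\om|^2+b\cdot\om}\ud\om=(\pi/(-a))^{d/2}\me^{-b^2/(4a)}$ --- the identical computation), and then collapse the $x^2$-coefficient using $1+\sinh^2(t)=\cosh^2(t)$. The only differences are presentational, and your explicit appeal to analytic continuation (noting $\Re(a)=-\pi<0$) is if anything slightly more careful than the paper's implicit use of \eqref{fouGau} with a complex quadratic coefficient.
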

\begin{proof}
We expand \eqref{t9}:
\begin{align}
 u(t,x)  ={}& \left(\frac{1}{\cosh(t)}\right)^{\frac{d}{2}}\int_{\R^d} \me^{ \pi i \tanh(t)x^2 + \frac{2\pi i x\om}{\cosh(t)}-\pi i \tanh(t) \om^2}\F\left(\pi(m,n)g\right)(\om)\ud \om\nonumber \\
  ={}& \left(\frac{1}{\cosh(t)}\right)^{\frac{d}{2}}\me^{ \pi i \tanh(t) x^2}\int_{\R^d}\me^{ 2\pi i \left[\frac{x\cdot \om}{\cosh(t)}-\frac{\tanh(t)\om^2}{2}\right]}T_n M_{-m}\hat{g}(\om)\ud\om\nonumber\\
  ={}& \left(\frac{1}{\cosh(t)}\right)^{\frac{d}{2}}\me^{ \pi i \tanh(t) x^2}\nonumber\\
 \cdot{}&\int_{\R^d}\me^{ 2\pi i \left[\frac{x \om}{\cosh(t)}-\frac{\tanh(t)\om^2}{2}\right]} \me^{-2\pi i m (\om-n)-\pi|\om-n|^2}\ud\om\nonumber\\
 ={}&  \left(\frac{1}{\cosh(t)}\right)^{\frac{d}{2}}\me^{ \pi i \tanh(t) x^2}\me^{ -\pi n^2 + 2\pi i m\, n}\nonumber\\
 &\cdot \int_{\R^d}\me^{ -2\pi i \om\left[ m+ i n  - \frac{x}{\cosh(t)}\right]}\me^{ -\pi\om^2\left(1+i \tanh(t)\right)}\ud\om.\label{t10}
\end{align}
Using \eqref{fouGau},  we can restate \eqref{t10} as follows
\begin{equation}\label{eq11}
  C_t\, \me^{ \pi i \tanh(t) x^2}\me^{ -\pi {\left[ m+ i n  - \frac{x}{\cosh(t)}\right]^2 \left(\frac{\cosh(t)}{\cosh(t) + i\sinh(t)}\right) }},
\end{equation}
with $C_t = \left(\frac{1}{\cosh(t)+i\sinh(t)}\right)^{\frac{d}{2}}\me^{ -\pi n^2 + 2\pi i m\cdot\, n}$.\\
Since
\begin{align*}
u(t,x) = {} &  C_t\, \me^{ \pi i \tanh(t) x^2-\pi {\left[ m+ i n  - \frac{x}{\cosh(t)}\right]^2 \left(\frac{\cosh(t)}{\cosh(t) + i\sinh(t)}\right) }}\\
 = {} & C_t\, \me^{ \pi i \frac{\sinh(t)}{\cosh(t)} x^2} \me^{ -\pi \left[ (m+ i n)^2  \right]\left(\frac{\cosh(t)}{\cosh(t) + i\sinh(t)}\right)}\\
\cdot& \me^{ -\pi \left[-2(m+ i n)\frac{x}{\cosh(t)}+\frac{x^2}{\cosh(t)^2}\right]\left(\frac{\cosh(t)}{\cosh(t) + i\sinh(t)}\right)}\\ 
 = {} & C_t\, \me^{ -\pi x^2 \left[ \frac{1-i\sinh(t)\left(\cosh(t)+ i\sinh(t)\right)}{\cosh(t)\left(\cosh(t)+i\sinh(t)\right)}\right]}\\
 &\cdot  \me^{ \frac{2\pi x(m+ i n)}{\cosh(t)+i\sinh(t)}}\me^{ -\frac{\pi (m+ i n)^2 \cosh(t)}{\cosh(t)+i\sinh(t)}}\\
 = {}& C_t\, \me^{ -\pi x^2 \left[ \frac{\cosh(t)^2-i\sinh(t)\cosh(t)}{\cosh(t)(\cosh(t)+i\sinh(t))}\right]} \me^{ \frac{2\pi x(m+ i n)}{\cosh(t)+i\sinh(t)}}\\ &\cdot \me^{ -\frac{\pi (m^2+ 2 i m n-n^2) \cosh(t)}{\cosh(t)+i\sinh(t)}}\\
={} & C_t \me^{ -\pi x^2 \left[ \frac{\cosh(t)^2-i\sinh(t)\cosh(t)}{\cosh(t)(\cosh(t)+i\sinh(t))}\right]} \me^{ \frac{2\pi x(m+ i n)}{\cosh(t)+i\sinh(t)}}\\&\cdot  \me^{ -\frac{\pi (m^2+ 2 i m n-n^2) \cosh(t)}{\cosh(t)+i\sinh(t)}}\\
={} & \tilde{C}_t \me^{ -\pi x^2 \left[ \frac{\cosh(t)-i\sinh(t)}{\cosh(t)+i\sinh(t)}\right]} \me^{ \frac{2\pi x(m+ i n)}{\cosh(t)+i\sinh(t)}},\\
\end{align*}
where \[
\tilde{C_t}=\left(\frac{1}{\cosh(t)+i\sinh(t)}\right)^{\frac{d}{2}} \me^{ -\pi n^2 + 2\pi i m n - \frac{\pi (m^2+ 2 i m n-n^2) \cosh(t)}{\cosh(t)+i\sinh(t)}}.\]
Hence the result is proved.
\end{proof}

\indent The computation of the Gabor matrix $T_{m,n,m',n'}=\la T \pi(m,n)g,\pi(m',n')g\ra$ of the metaplectic operator $T_t$, reduces now to compute the inner product above.
\begin{theorem}
Let $T_t$  be the operator defined in \eqref{t9} and $(m,n), (m',n')\in \Lambda$ with $\Lambda = \al\Z^d\times\be\Z^d$,  $\al\be<1$. Then 
\begin{align}
\label{Sol_Hr}
\begin{split}
T_{m,n,m',n'} = C_t \me^{ -\frac{\pi}{2}\left[|m|^2+|n|^2+|n'|^2+|m'|^2 + 2\tanh(mn-m'n') -2(mm'-nn')\right]},
\end{split}
\end{align}
where $C_t =  \frac{\me^{i\psi}}{\left(2\cosh(t)\right)^{\frac{d}{2}}} $ and $\vert \me^{i\psi}\vert = 1$.
\end{theorem}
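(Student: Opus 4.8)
The plan is to substitute the explicit form of $T\pi(m,n)g$ given by Lemma \ref{Lemma41} into the inner product $T_{m,n,m',n'} = \la T\pi(m,n)g, \pi(m',n')g\ra$ and to evaluate the resulting Gaussian integral. Writing $\pi(m',n')g(x) = \me^{2\pi i n' x}\me^{-\pi(x-m')^2}$, so that its complex conjugate is $\me^{-2\pi i n' x}\me^{-\pi(x-m')^2}$, and pulling the $x$-independent factors of Lemma \ref{Lemma41} out of the integral, the matrix entry reads
\begin{equation*}
T_{m,n,m',n'} = C_t\,\me^{-\frac{\pi(m+in)^2\cosh(t)}{\cosh(t)+i\sinh(t)}} \int_{\R^d} \me^{-\pi x^2\left[\frac{\cosh(t)-i\sinh(t)}{\cosh(t)+i\sinh(t)}\right]+\frac{2\pi x(m+in)}{\cosh(t)+i\sinh(t)}-2\pi i n' x - \pi(x-m')^2}\,\ud x,
\end{equation*}
with $C_t = (\cosh(t)+i\sinh(t))^{-d/2}\me^{-\pi n^2+2\pi i mn}$ as in Lemma \ref{Lemma41}.

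First I would gather the integrand into the canonical Gaussian shape $\me^{-\pi A x^2 + 2\pi B x}$. The quadratic coefficient is
\[A = \frac{\cosh(t)-i\sinh(t)}{\cosh(t)+i\sinh(t)}+1 = \frac{2\cosh(t)}{\cosh(t)+i\sinh(t)},\]
which has strictly positive real part, so that \eqref{fouGau} (equivalently, completing the square) applies and gives $\int \me^{-\pi A x^2 + 2\pi Bx}\,\ud x = A^{-1/2}\me^{\pi B^2/A}$ in each of the $d$ coordinates. The linear coefficient $B$ assembles $\frac{m+in}{\cosh(t)+i\sinh(t)}$ from $T\pi(m,n)g$ together with $m'-in'$ coming from $\me^{-\pi(x-m')^2}$ and $\me^{-2\pi i n'x}$; the constant term $\me^{-\pi m'^2}$ is carried along.

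The decisive simplification is $A^{-1/2} = \left(\frac{\cosh(t)+i\sinh(t)}{2\cosh(t)}\right)^{1/2}$: combined with the factor $(\cosh(t)+i\sinh(t))^{-d/2}$ sitting inside $C_t$, it produces precisely the modulus $(2\cosh(t))^{-d/2}$ claimed in the statement, the leftover complex number having unit modulus and feeding only the phase $\me^{i\psi}$. Substituting $A$ and $B$ into $\me^{\pi B^2/A}$ and multiplying by the constants $\me^{-\pi m'^2}$, $\me^{-\pi n^2 + 2\pi i mn}$ and $\me^{-\pi(m+in)^2\cosh(t)/(\cosh(t)+i\sinh(t))}$, I would separate the resulting exponent into its real and imaginary parts.

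The main obstacle is exactly this separation: the total exponent is a rational function of $\cosh(t)$ and $\sinh(t)$ with complex numerator, and one must isolate its real part with care. Rationalising the denominators and using repeatedly $\cosh^2(t)-\sinh^2(t)=1$ and $\sinh(t)/\cosh(t)=\tanh(t)$, all purely imaginary contributions are absorbed into $\me^{i\psi}$, while the surviving real part collapses into the symmetric quadratic form $-\frac{\pi}{2}\big[|m|^2+|n|^2+|m'|^2+|n'|^2+2\tanh(t)(mn-m'n')-2(mm'-nn')\big]$. Taking moduli then yields \eqref{Sol_Hr} with $C_t = \me^{i\psi}/(2\cosh(t))^{\frac{d}{2}}$ and $|\me^{i\psi}|=1$.
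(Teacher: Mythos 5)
Your proposal follows essentially the same route as the paper's proof: substitute the expression from Lemma \ref{Lemma41} into the inner product, collect the integrand into a single Gaussian $\me^{-\pi A x^2 + 2\pi B x}$ with $A = \frac{2\cosh(t)}{\cosh(t)+i\sinh(t)}$, evaluate it via \eqref{fouGau} (which also produces the prefactor $(2\cosh(t))^{-\frac{d}{2}}$ exactly as you describe), and then rationalise the exponent to isolate its real part. The only difference is explicitness in the last step: the paper writes out the real/imaginary separation in full (its function $\Phi$), whereas you assert the collapse to $-\frac{\pi}{2}\bigl[|m|^2+|n|^2+|m'|^2+|n'|^2+2\tanh(t)(mn-m'n')-2(mm'-nn')\bigr]$ after naming the correct tools ($\cosh^2(t)-\sinh^2(t)=1$, rationalisation); since that algebra is routine though lengthy, your outline is sound.
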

	
\begin{proof}
Using Lemma \ref{Lemma41}:
\begin{align*}
& T_{m,n,m',n'} = \\
 ={}& \la \tilde{C}_t \me^{ -\pi x^2 \left[ \frac{\cosh(t)-i\sinh(t)}{\cosh(t)+i\sinh(t)}\right]+\frac{2\pi x(m+ i n)}{\cosh(t)+i\sinh(t)}},\pi(m',n')g\ra\\
 ={}& \tilde{C}_t\int_{\R^d}\me^{-\pi x^2 \left[ \frac{\cosh(t)-i\sinh(t)}{\cosh(t)+i\sinh(t)}\right]+\frac{2\pi x(m+ i n)}{\cosh(t)+i\sinh(t)}}\overline{M_{n'}T_{m'}\me^{-\pi |x|^2}}\ud x\\
 ={}& \tilde{C}_t\int_{\R^d} \me^{ -\pi x^2 \left[ \frac{\cosh(t)-i\sinh(t)}{\cosh(t)+i\sinh(t)}\right]}\\ 
 &\cdot \me^{\frac{2\pi x(m+ i n)}{\cosh(t)+i\sinh(t)}-2\pi i n'x -\pi |x-m'|^2}\ud x\\
 ={}& \tilde{C}_t \me^{ -\pi {m'}^2}\int_{\R^d} \me^{ -\pi x^2 \left[ \frac{\cosh(t)-i\sinh(t)}{\cosh(t)+i\sinh(t)}\right]}\\ 
 &\cdot \me^{\frac{2\pi x(m+ i n)}{\cosh(t)+i\sinh(t)}-2\pi i n'x-\pi x^2 +2\pi m'x}\ud x\\
 ={}& \tilde{C}_t \me^{ -\pi {m'}^2}\int_{\R^d} \me^{-\pi x^2 \left[ \frac{\cosh(t)-i\sinh(t)}{\cosh(t)+i\sinh(t)}+1\right]} \\&\cdot \me^{\frac{2\pi x(m+ i n)}{\cosh(t)+i\sinh(t)} -2\pi i x(im'+n')}\ud x\\
 ={}& \tilde{C}_t \me^{ -\pi {m'}^2}\int_{\R^d} \me^{ -\pi x^2 \left[ \frac{2\cosh(t)}{\cosh(t)+i\sinh(t)}\right]} \\&\cdot \me^{ -2\pi i x\frac{\left[(i m- n)+(im'+n')\left(\cosh(t)+i\sinh(t)\right)\right]}{\cosh(t)+i\sinh(t)}}\ud x.\\
\end{align*}
Using \eqref{fouGau}, we obtain:
\begin{multline}
\label{t13}
T_{m,n,m',n'} =  \tilde{C}_t \me^{ -\pi {m'}^2}\left(\frac{\cosh(t)+i\sinh(t)}{2\cosh(t)}\right)^{\frac{d}{2}}\\ \cdot \me^{ -\frac{\pi}{2}\cdot\frac{\left[(im-n)+(im'+n')(\cosh(t)+i\sinh(t))\right]^2}{\cosh(t)\left(\cosh(t)+i\sinh(t)\right)}}.
\end{multline} 
Expanding $\tilde{C}_t$, \eqref{t13} becomes:
\begin{align}
\begin{split}
T_{m,n,m',n'}= \frac{1}{(2\cosh(t))^{\frac{d}{2}}}\me^{ -\frac{\pi}{2}\,\frac{1}{\cosh(t) (\cosh^2(t)+\sinh^2(t))}\Phi(m,n,m',n',t)}.\label{t15}
\end{split}
\end{align}
Now we have to give a clear formulation of $\Phi$. 
The calculations that follow do not take care of the imaginary part which is always contained in a  real-valued function $\psi$.
\begin{align*}
\Phi = {}& 2\left[(|m'|^2+|n|^2)+2 i mn\right]\cdot\left[\cosh(t)	\cdot \left(\cosh^2(t)+\sinh^2(t)\right)\right] \\
&+ \left\{2(m+in)^2\cosh^2(t)\right\}\left(\cosh(t) - i\sinh(t)\right),\\
&+ \left\{\left[(im-n)+(im'+n')(\cosh(t)+i\sinh(t))\right]^2\right\}\\ 
&\cdot \left(\cosh(t) - i\sinh(t)\right),\\
= {} & 2(|m'|^2+|n|^2)\left(\cosh^3(t)+\cosh(t)\sinh^2(t)\right) \\
&+ 2(|m|^2 +2i m n -|n|^2)\cdot(\cosh^3(t) - i\cosh^2(t)\sinh(t))\\
&+ (im-n)^2\cdot\left(\cosh(t) - i\sinh(t)\right) -2(im-n)(im'+n')\\
&\cdot (\cosh^2(t)+\sinh^2(t))\\
&+ (im'+n')^2(\cosh(t)+i\sinh(t))\,\left(\cosh^2(t) +\sinh^2(t)\right)+i\psi\\
= {}& 2(|m'|^2+|n|^2)\left(\cosh^3(t)+\cosh(t)\sinh^2(t)\right) + 2(|m|^2-|n|^2)\cosh^3(t)\\ &+4mn\cosh^2(t)\sinh(t)+ (-m^2+n^2)\cosh(t) -2mn\sinh(t) \\&-2\left( -mm'-nn'-imn'-im'n)\,\left(\cosh^2(t) +\sinh^2(t)\right)\right)\\
&+ (-|m'|^2+|n'|^2)\cosh(t)\left(\cosh^2(t) +\sinh^2(t)\right) \\ &-2\,m'n'\sinh(t)\,\left(\cosh^2(t) +\sinh^2(t)\right)+i\psi\vspace{10cm}\\
= {} & \left[(|m|^2+|n|^2)\cosh(t) + 2mn\sinh(t)+2(mm'+nn')\right]\\ &\cdot\left(\cosh^2(t) +\sinh^2(t)\right)\\
&+ \left[(|m'|^2+|n'|^2)\cosh(t)- 2m'n'\sinh(t)\right]\cdot\left(\cosh^2(t) +\sinh^2(t)\right)+i\psi. \\
\end{align*}
Finally, using \eqref{t15} the Gabor matrix can be expressed as
\[T_{m,n,m',,n'} = C_t \me^{ -\frac{\pi}{2}\left[|m|^2+|n|^2+|n'|^2+|m'|^2 +2\tanh(t)(mn-m'n')-2(mm'-nn')\right]},\]
where $C_t =  \frac{\me^{ i\psi}}{(2\cosh(t))^{\frac{d}{2}}} $ and $|\me^{ i\psi}|=1$.
Thus, the Theorem is proved.
\end{proof} 
\section{Numerical Result}
In this section we show numerical examples to test the fastness of the Gabor coefficients' decay, in dimension $d=1,2,3.$ In dimension $d=1$ we will show the magnitude of the Short-Time Fourier transform of the solutions, i.e. the spectrogram of the solutions to the Cauchy problems of the previous sections. The initial datum we will use is provided by $M_nT_m g$ with $g$ Gaussian function. We shall represent the behavior of the solution in the phase space at different instants of time.  
In all our examples we will use a lattice on $\Z^{2d}$ with parameters $\al = 1, \be = \frac{1}{2}$. In this way the Gabor system $\mathcal{G}(\me^{-\pi|x|^2},1,\frac{1}{2})$ is a frame for $\ld$.
\subsection{Heat Equation}
Numerical evaluations of the Gabor matrix for this problem are already treated in \cite{2012arXiv1209.0945C}. Here, using the exact representation of the Gabor matrix, namely 
\begin{multline*}
\left|\la \sigma_{\rho}(t,D)\pi(m,n)g , \pi(m',n')g\ra\right| = \\
(2+4\pi\rho t )^{-\frac{d}{2}} \me^{  -\pi\left [|n|^2+|n'|^2+\frac{1}{2+4\pi\rho t}\left(|m-m'|^2+|n+n'|^2\right)\right]}.
\end{multline*}
we obtain a faster decay. Moreover, the equation above clarify that the growth of the diffusion factor $\rho$ and the time $t$ cause the same diffusive effect on the solution.
In fact, if we fix the time variable and we let $\rho$ increase, we see a diffusion in the space variable, as shown in Figure \ref{Diff}.
Notice that it is equivalent to fix $\rho= 1$ and let the time $t$ grow, as shown by \ref{Figure_He}.
\begin{figure}\label{Diff}
\vspace{1cm}
\begin{center}{\bf Dependence of the coefficients decay from the Thermal Diffusion.}
\end{center}
\begin{minipage}[h]{0.47\textwidth}
\vspace{1cm}
\includegraphics[width=1\textwidth]{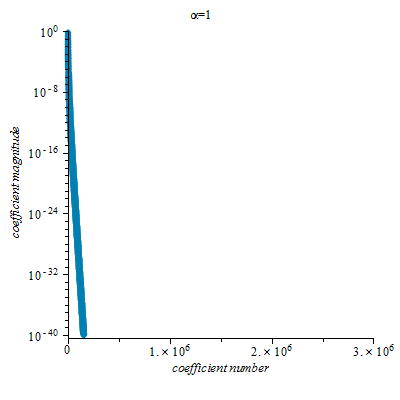}
\end{minipage}\hfill
\begin{minipage}[h]{0.47\textwidth}
\vspace{1cm}\includegraphics[width=1\textwidth]{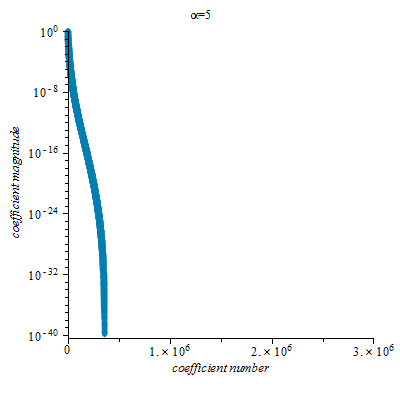}
\end{minipage}
\caption{
The dissipative effect caused by the thermal diffusion $\alpha$.}
\end{figure}
\begin{figure}\label{Figure_He}
\vspace{1cm}
\begin{center}{\bf Magnitude of the coefficient for the Heat Equation at different instants of time $t$, in dimension $d=2$.}
\end{center}
\begin{minipage}[h]{0.47\textwidth}
\vspace{1cm}
\includegraphics[width=1\textwidth]{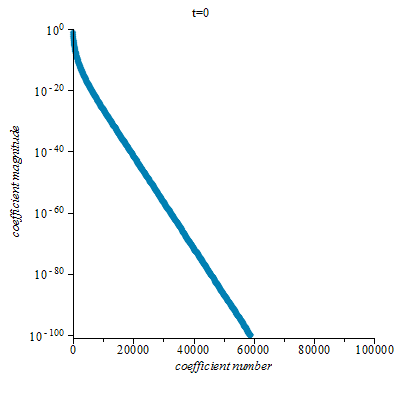}
\end{minipage}\hfill
\begin{minipage}[h]{0.47\textwidth}
\vspace{1cm}\includegraphics[width=1\textwidth]{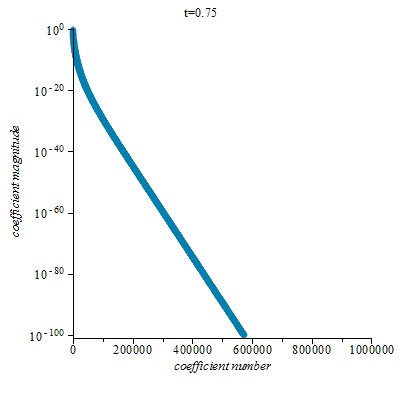}
\end{minipage}\hfill
\begin{minipage}[h]{0.47\textwidth}
\vspace{2cm}\includegraphics[width=1\textwidth]{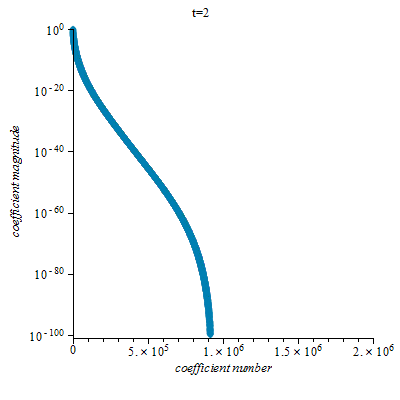}
\vspace{0.5cm}
\end{minipage}\hfill
\begin{minipage}[h]{0.47\textwidth}
\vspace{2cm}\includegraphics[width=1\textwidth]{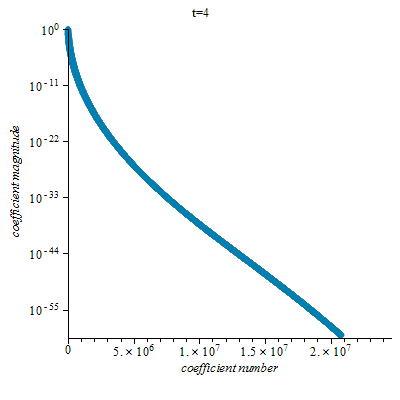}
\vspace{0.5cm}
\end{minipage}\hfill
\caption{
The dissipative effect grows together with the time $t$. Similar effects are observable by increasing the thermal diffusion $\rho$.}
\end{figure}
\begin{figure}
\begin{center}{\bf Magnitude of the coefficient for the Heat Equation at different instants of time $t$, in dimension $d=3$.}
\end{center}
\begin{minipage}[h]{0.47\textwidth}
\vspace{1cm}
\includegraphics[width=1\textwidth]{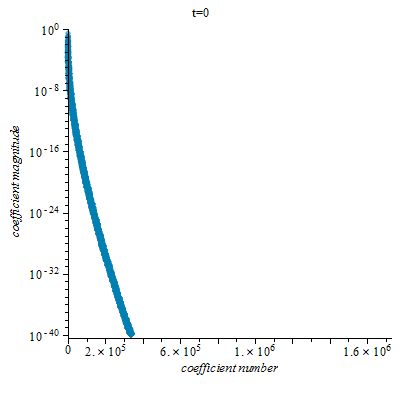}
\end{minipage}\hfill
\begin{minipage}[h]{0.47\textwidth}
\vspace{1cm}\includegraphics[width=1\textwidth]{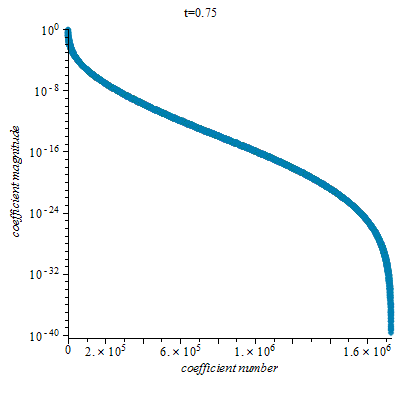}
\end{minipage}
\caption{}
\end{figure}
\subsection{Generalized Heat Equation}
In dimension $d=1$, the Gabor matrix associated to the Cauchy problem \eqref{Pc_Gh} fulfills the estimate:
\[\vert \la \sigma_{k}(t,D) \pi(m,n)g,\pi(m',n')g\ra\vert  \mc C_{t,k}\me^{ -\tilde{\varepsilon}_{t,k} 2^{-\frac{1}{s}}(\lambda-\nu)^{\frac{1}{s}}}, \]
with $s = \frac{2k}{2k-1}$, $\tilde{\varepsilon}_{k,t} = \left(\frac{2k-1}{4k}\right)\left(\frac{1}{4\pi k t}\right)^{\frac{1}{2k-1}}2^{-\frac{k}{2k-1}} $ and $C_{t,k} =  |4\pi k t|^{\frac{k-1}{2k-1}}$ .
Using this equation we show the coefficients' decay in dimension $d=1$ together with their dependence from $k$ and $t$.
\begin{figure}[h!]\label{Figure_Gen_He}
\vspace{1cm}
\begin{center}{\bf Magnitude of the coefficient for the Generalized Heat Equation at different instants of time $t$ and Laplacian powers $k$, in dimension $d=1$.}
\end{center}
\begin{minipage}[h]{0.47\textwidth}
\vspace{1cm}
\includegraphics[width=1\textwidth]{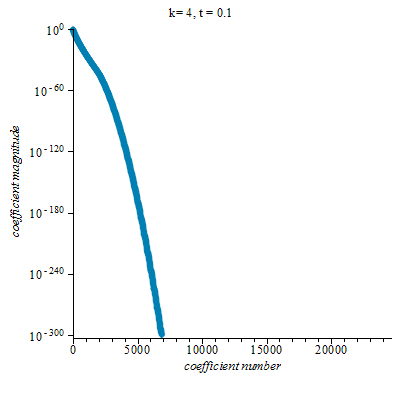}
\end{minipage}\hfill
\begin{minipage}[h]{0.47\textwidth}
\vspace{1cm}\includegraphics[width=1\textwidth]{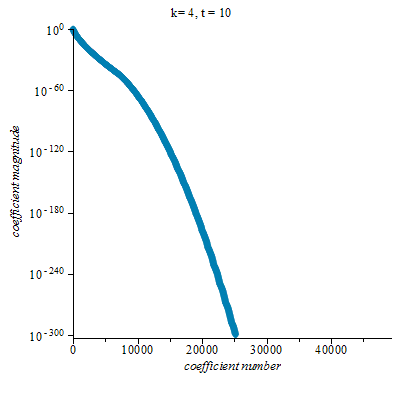}
\end{minipage}\hfill
\begin{minipage}[h]{0.47\textwidth}
\vspace{2cm}\includegraphics[width=1\textwidth]{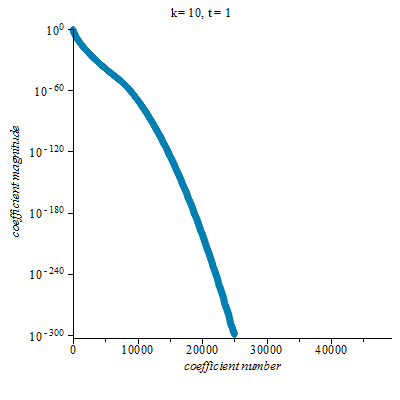}
\vspace{0.5cm}
\end{minipage}\hfill
\begin{minipage}[h]{0.47\textwidth}
\vspace{2cm}\includegraphics[width=1\textwidth]{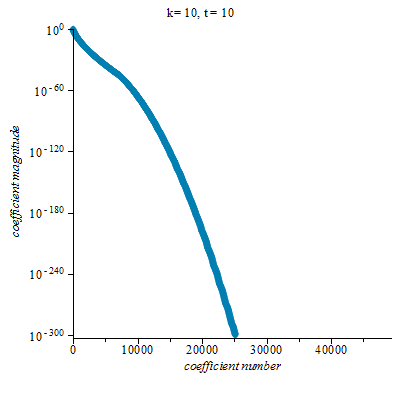}
\vspace{0.5cm}
\end{minipage}\hfill
\caption{This picture shows that the decay obtained at different instant times and powers $k$ is almost identical.}
\end{figure}
\subsection{Harmonic Repulsor}
The exact expression of the Gabor matrix related to the Harmonic Repulsor is given by equation \eqref{Sol_Hr}, that is
 \[\vert T_{m,n,m',n'}\vert = C\me^{ -\frac{\pi}{2}\left[|m|^2+|n|^2+|n'|^2+|m'|^2 +2\tanh(t)(mn-m'n')-2(mm'-nn')\right]},\]whit $C = \frac{1}{(2\cosh(t))^{\frac{d}{2}}}$.
Figure \ref{Figure_REPU} shows that using this expression we obtain huge decays of the coefficients. In dimension $d=2$ and $d=3$ we obtain results similar to those in \cite{MR2502369} for the case of the harmonic oscillator.
\begin{figure}[h!]\label{Figure_REPU}
\vspace{1cm}
\begin{center}{\bf Magnitude of the coefficient for the Harmonic Repulsor at different instants of time $t$, in dimension $d=2$.}
\end{center}
\begin{minipage}[h]{0.47\textwidth}
\vspace{1cm}
\includegraphics[width=1\textwidth]{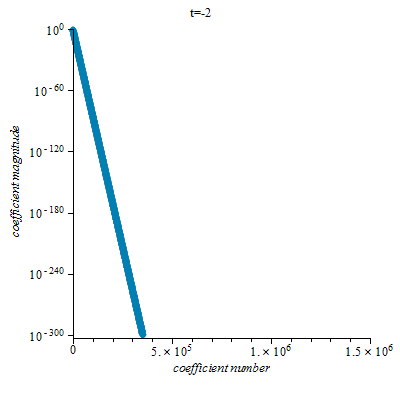}
\end{minipage}\hfill
\begin{minipage}[h]{0.47\textwidth}
\vspace{1cm}\includegraphics[width=1\textwidth]{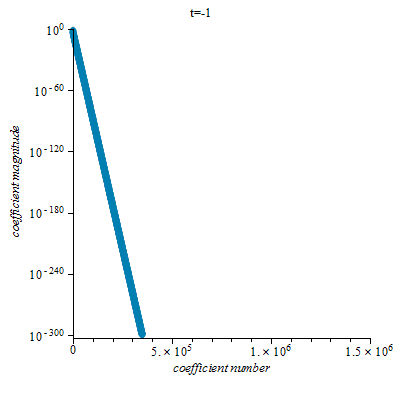}
\end{minipage}\hfill
\begin{minipage}[h]{0.47\textwidth}
\vspace{2cm}\includegraphics[width=1\textwidth]{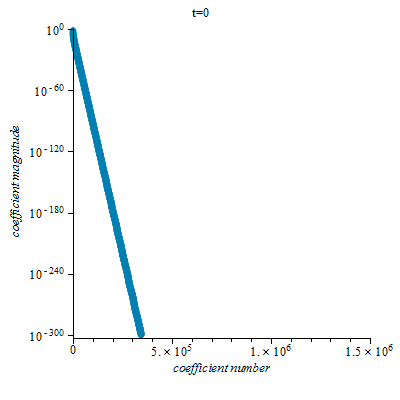}
\vspace{0.5cm}
\end{minipage}\hfill
\begin{minipage}[h]{0.47\textwidth}
\vspace{2cm}\includegraphics[width=1\textwidth]{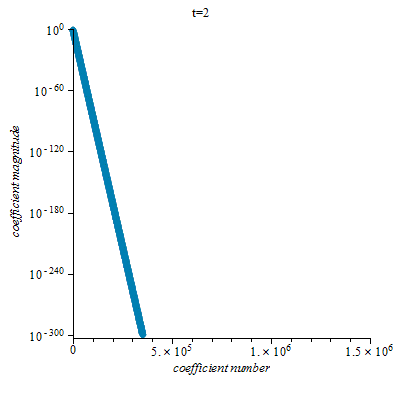}
\vspace{0.5cm}
\end{minipage}\hfill
\caption{This picture shows that the decay obtained at different instants of time is almost identical.}
\end{figure}
\begin{figure}[h!]\label{Figure_REPU_3D}
\vspace{1cm}
\begin{center}{\bf Magnitude of the coefficient for the Harmonic Repulsor at different instants of time $t$, in dimension $d=3$.}
\end{center}
\begin{minipage}[h]{0.47\textwidth}
\vspace{1cm}
\includegraphics[width=1\textwidth]{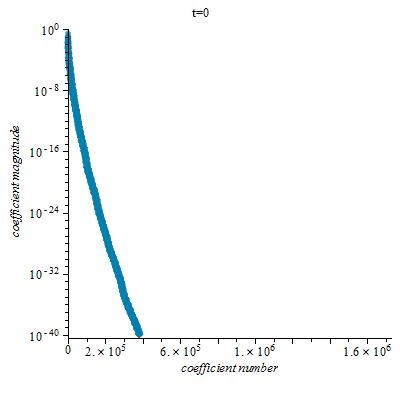}
\end{minipage}\hfill
\begin{minipage}[h]{0.47\textwidth}
\vspace{1cm}\includegraphics[width=1\textwidth]{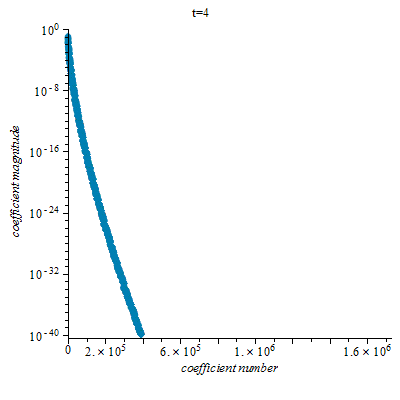}
\end{minipage}
\caption{This picture shows that the decay obtained at different instants of time is almost identical.}
\end{figure}
Figure \ref{Figure_REP_CUNT}  resemble the behavior of the Harmonic Repulsor in the phase space.
The Gaussian approaches the origin from the south-east and then goes to north-est. As the picture shows, although the Gaussian spread in the spatial variable, it remains concentrated in the Time-Frequency domain.
\begin{figure}[htb!]\label{Figure_REP_CUNT}
\vspace{1cm}
\begin{center}{\bf Contourplots of the STFT of Harmonic Repulsor's solution at different instants of time $t$.}
\end{center}
\begin{minipage}[h]{0.47\textwidth}
\vspace{1cm}
\includegraphics[width=1\textwidth]{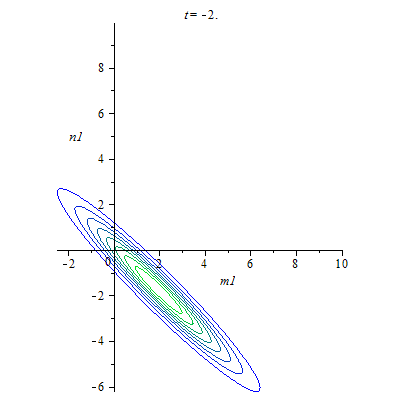}
\end{minipage}\hfill
\begin{minipage}[h]{0.47\textwidth}
\vspace{1cm}\includegraphics[width=1\textwidth]{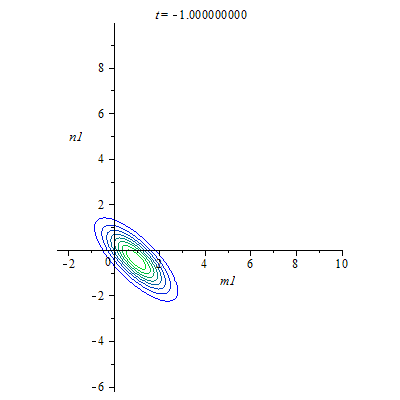}
\end{minipage}\hfill
\begin{minipage}[h]{0.47\textwidth}
\vspace{2cm}\includegraphics[width=1\textwidth]{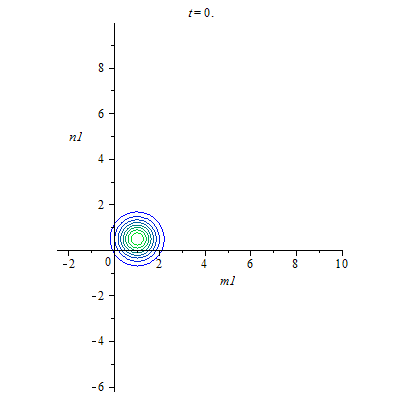}
\vspace{0.5cm}
\end{minipage}\hfill
\begin{minipage}[h]{0.47\textwidth}
\vspace{2cm}\includegraphics[width=1\textwidth]{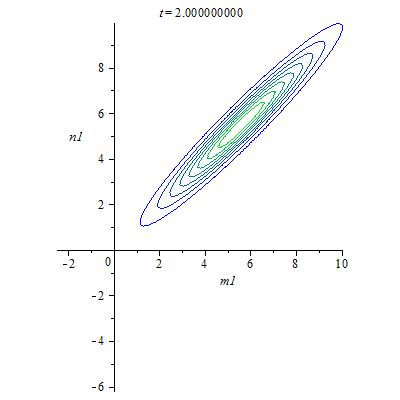}
\vspace{0.5cm}
\end{minipage}\hfill
\caption{
Contour plot for the Short-Time Fourier Transform of the solution of \eqref{Pc_Hr} in
dimension d = 1 at different instants of time, with initial datum and
window $u(x) = g(x) = \me^{-\pi|x|^2}$.}
\end{figure}
\clearpage
\section*{Acknowledgments}
 I am sincerely grateful to Professor E. Cordero for the fruitful  discussion, valuable advices, constructive
criticism and constant review of this work. I would like to thank Professors E.Cordero and L. Rodino for inspiring this paper. I also wish to thank M. Borsero for the useful suggestions, and the final review aimed at improving the readability of the paper. Finally, I am thankful for the enormous work of the anonymous reviewer who suggested important corrections that gave consistency to the paper.
\addcontentsline{toc}{chapter}{References}
\bibliography{bib_mag}

\begin{thebibliography}{10}

\bibitem{christensen2011gabor}
Ole Christensen, Hans~G Feichtinger, and Stephan Paukner.
\newblock Gabor analysis for imaging.
\newblock In {\em Handbook of Mathematical Methods in Imaging}, pages
  1271--1307. Springer, 2011.

\bibitem{2012arXiv1209.0945C}
E.~{Cordero}, F.~{Nicola}, and L.~{Rodino}.
\newblock {Gabor representations of evolution operators}.
\newblock {\em ArXiv e-prints}, September 2012.

\bibitem{MR3007846}
Elena Cordero, Karlheinz Gr{\"o}chenig, Fabio Nicola, and Luigi Rodino.
\newblock Wiener algebras of {F}ourier integral operators.
\newblock {\em J. Math. Pures Appl. (9)}, 99(2):219--233, 2013.

\bibitem{MR2502369}
Elena Cordero, Fabio Nicola, and Luigi Rodino.
\newblock Sparsity of {G}abor representation of {S}chr\"odinger propagators.
\newblock {\em Appl. Comput. Harmon. Anal.}, 26(3):357--370, 2009.

\bibitem{MR2556742}
Elena Cordero, Fabio Nicola, and Luigi Rodino.
\newblock Time-frequency analysis of {F}ourier integral operators.
\newblock {\em Commun. Pure Appl. Anal.}, 9(1):1--21, 2010.

\bibitem{cordero2012time}
Elena Cordero, Fabio Nicola, and Luigi Rodino.
\newblock Time-frequency analysis of schroedinger propagators.
\newblock In {\em Evolution Equations of Hyperbolic and Schr{\"o}dinger Type},
  pages 63--85. Springer, 2012.

\bibitem{MR724721}
M.~S.~P. Eastham.
\newblock Asymptotic formulae of {L}iouville-{G}reen type for higher-order
  differential equations.
\newblock {\em J. London Math. Soc. (2)}, 28(3):507--518, 1983.

\bibitem{MR983366}
Gerald~B. Folland.
\newblock {\em Harmonic analysis in phase space}, volume 122 of {\em Annals of
  Mathematics Studies}.
\newblock Princeton University Press, Princeton, NJ, 1989.

\bibitem{MR0435832}
I.~M. Gelfand and G.~E. Shilov.
\newblock {\em Generalized functions. {V}ol. 2-3}.
\newblock Academic Press, New York, 1967.

\bibitem{grafakos2008gabor}
Loukas Grafakos and Christopher Sansing.
\newblock Gabor frames and directional time--frequency analysis.
\newblock {\em Applied and Computational Harmonic Analysis}, 25(1):47--67,
  2008.

\bibitem{MR1843717}
Karlheinz Gr{\"o}chenig.
\newblock {\em Foundations of time-frequency analysis}.
\newblock Applied and Numerical Harmonic Analysis. Birkh\"auser Boston Inc.,
  Boston, MA, 2001.

\bibitem{MR2498351}
Karlheinz Gr{\"o}chenig.
\newblock Time-frequency analysis of {S}j\"ostrand's class.
\newblock {\em Rev. Mat. Iberoam.}, 22(2):703--724, 2006.

\bibitem{MR2744776}
Christopher Heil.
\newblock {\em A basis theory primer}.
\newblock Applied and Numerical Harmonic Analysis. Birkh\"auser/Springer, New
  York, expanded edition, 2011.

\bibitem{hinton}
Don~B. Hinton.
\newblock Asymptotic behavior of solutions of {$(ry^{(m)})^{(k)}\pm qy=0$}.
\newblock {\em J. Differential Equations}, 4:590--596, 1968.

\bibitem{MR2477144}
M.~P. Lamoureux and G.~F. Margrave.
\newblock An introduction to numerical methods of pseudodifferential operators.
\newblock In {\em Pseudo-differential operators}, volume 1949 of {\em Lecture
  Notes in Math.}, pages 79--133. Springer, Berlin, 2008.

\bibitem{MR578097}
P.~G.~L. Leach.
\newblock {${\rm Sl}(3,\,{\bf R})$} and the repulsive oscillator.
\newblock {\em J. Phys. A}, 13(6):1991--2000, 1980.

\bibitem{Ro_nic}
Fabio Nicola and Luigi Rodino.
\newblock {\em Global pseudo-differential calculus on {E}uclidean spaces},
  volume~4 of {\em Pseudo-Differential Operators. Theory and Applications}.
\newblock Birkh\"auser Verlag, Basel, 2010.

\bibitem{onchis2010gabor}
Darian~M Onchis, Pedro Real, and Gilbert-Rainer Gillich.
\newblock Gabor frames and topology-based strategies for astronomical images.
\newblock 2010.

\bibitem{MR1173117}
Kristian Seip.
\newblock Density theorems for sampling and interpolation in the
  {B}argmann-{F}ock space. {I}.
\newblock {\em J. Reine Angew. Math.}, 429:91--106, 1992.

\bibitem{MR1173118}
Kristian Seip and Robert Wallst{\'e}n.
\newblock Density theorems for sampling and interpolation in the
  {B}argmann-{F}ock space. {II}.
\newblock {\em J. Reine Angew. Math.}, 429:107--113, 1992.

\bibitem{Strohmer2006237}
Thomas Strohmer.
\newblock Pseudodifferential operators and banach algebras in mobile
  communications.
\newblock {\em Applied and Computational Harmonic Analysis}, 20(2):237 -- 249,
  2006.
\newblock Computational Harmonic Analysis - Part 3.

\bibitem{MR1906251}
Hennie ter Morsche and Patrick~J. Oonincx.
\newblock On the integral representations for metaplectic operators.
\newblock {\em J. Fourier Anal. Appl.}, 8(3):245--257, 2002.

\bibitem{MR919406}
Wolfgang Wasow.
\newblock {\em Asymptotic expansions for ordinary differential equations}.
\newblock Dover Publications Inc., New York, 1987.
\newblock Reprint of the 1976 edition.

\end{thebibliography}
\nocite{*}
\bibliographystyle{plain}
\end{document}